\def \R{\mathbb R}
\def \E{\mathbb E}
\newcommand{\mE}[1]{\mathbb{E}\left[\, #1 \,\right]}
\newtheorem{thm}{Theorem}[section]
\newtheorem{lem}[thm]{Lemma}
\newtheorem{prop}[thm]{Proposition}
\newtheorem{defin}[thm]{Definition}
\newtheorem{rem}[thm]{Remark}
\newtheorem{example}[thm]{Example}
\newcommand{\Ind}{{\rm{\textbf{1}}}}
\renewcommand{\tilde}{\widetilde}
\newcommand{\cF}{{\ensuremath{\mathcal F}} }
\newcommand{\cG}{{\ensuremath{\mathcal G}} }
\newcommand{\cI}{{\ensuremath{\mathcal I}} }
\newcommand{\cB}{{\ensuremath{\mathcal B}} }
\newcommand{\cS}{{\ensuremath{\mathcal S}} }
\newcommand{\cW}{{\ensuremath{\mathcal W}} }
\DeclareMathSymbol{\leqslant}{\mathalpha}{AMSa}{"36} 
\DeclareMathSymbol{\geqslant}{\mathalpha}{AMSa}{"3E} 
\DeclareMathSymbol{\eset}{\mathalpha}{AMSb}{"3F}     
\renewcommand{\leq}{\;\leqslant\;}                   
\renewcommand{\geq}{\;\geqslant\;}                   
\newcommand{\bbP}{{\ensuremath{\mathbb P}} }
\def\captionfont@{\footnotesize}
\def\captionheadfont@{\scshape}
\long\def\@makecaption#1#2{%
  \vspace{2mm}
  \setbox\@tempboxa\vbox{\color@setgroup
    \advance\hsize-6pc\noindent
    \captionfont@\captionheadfont@#1\@xp\@ifnotempty\@xp
        {\@cdr#2\@nil}{.\captionfont@\upshape\enspace#2}%
    \unskip\kern-6pc\par
    \global\setbox\@ne\lastbox\color@endgroup}%
  \ifhbox\@ne 
    \setbox\@ne\hbox{\unhbox\@ne\unskip\unskip\unpenalty\unkern}%
  \fi
  \ifdim\wd\@tempboxa=\z@ 
    \setbox\@ne\hbox to\columnwidth{\hss\kern-6pc\box\@ne\hss}%
  \else 
    \setbox\@ne\vbox{\unvbox\@tempboxa\parskip\z@skip
        \noindent\unhbox\@ne\advance\hsize-6pc\par}%
\fi
  \ifnum\@tempcnta<64 
    \addvspace\abovecaptionskip
    \moveright 3pc\box\@ne
  \else 
    \moveright 3pc\box\@ne
    \nobreak
    \vskip\belowcaptionskip
  \fi
\relax
}
\def\writefig#1 #2 #3 {\rlap{\kern #1 truecm
\raise #2 truecm \hbox{#3}}}
\numberwithin{equation}{section}
\title{Stochastic neural field equations: A rigorous footing\thanks{This work was partially supported by the European Union Seventh
Framework Programme (FP7/2007-
2013) under grant agreement no. 269921 (BrainScaleS), no. 318723
(Mathemacs), and by the ERC advanced grant NerVi no. 227747.}}
\author{O. Faugeras \footnote{NeuroMathComp Team, INRIA Sophia-Antipolis. \textit{Email: olivier.faugeras@inria.fr}}\phantom{ } and J. Inglis \footnote{NeuroMathComp/TOSCA Team, INRIA Sophia-Antipolis. \textit{Email: james.inglis@inria.fr}}
}
\begin{document}
\maketitle

\begin{abstract}
We extend the theory of neural fields which has been developed in a deterministic framework by considering the influence spatio-temporal noise. The outstanding problem that we here address is the development of a theory that gives rigorous meaning to stochastic neural field equations, and conditions ensuring that they are well-posed.  Previous investigations in the field of computational and mathematical neuroscience 
have been numerical for the most part. Such questions have been considered for a long time in the theory of stochastic partial differential equations, where at least two different approaches have been developed, each having its advantages and disadvantages. It turns out that both approaches have also been used in computational and mathematical neuroscience, but with much less emphasis on the underlying theory. We present a review of two existing theories and show how they can be used to put the theory of stochastic neural fields on a rigorous footing. We also provide general conditions on the parameters of the stochastic neural field equations under which we guarantee that these equations are well-posed. In so doing we relate each approach to previous work in computational and mathematical neuroscience. We hope this will provide a reference that will pave the way for future studies (both theoretical and applied) of these equations, where basic questions of existence and uniqueness will no longer be a cause for concern.
\vspace{5pt}

\noindent {\textit{Keywords:}} Stochastic neural field equations, spatially correlated noise, multiplicative noise, stochastic integro-differential equation, existence and uniqueness.
\vspace{5pt}

\noindent {\textit{AMS subject classifications:}} 60H20, 60H30, 92C20.
\end{abstract}

\section{Introduction}
Neural field equations have been widely used to study spatiotemporal dynamics of cortical regions.  Arising as continuous spatial limits of discrete models, they provide a step towards an understanding of the relationship between the macroscopic spatially 
structured activity of densely populated regions of the brain, and the underlying microscopic neural circuitry. The discrete models 
themselves describe the activity of a large number of individual neurons with no spatial dimensions. Such neural mass models have been proposed by Lopes da Silva and colleagues  \cite{lopes-da-silva-hoeks-etal:74,lopes-da-silva-rotterdam-etal:76} to account for 
oscillatory phenomena observed in the brain, and were later put on a stronger mathematical footing in the study of epileptic-like seizures in \cite{jansen-rit:95}. When taking the spatial limit of such discrete models, one typically arrives at a nonlinear integro-differential equation, in which the integral term can be seen as a nonlocal interaction term describing the spatial distribution of 
synapses in a cortical region.  Neural field models build on the original work of Wilson and Cowan \cite{wilson-cowan-biophys-1972, 
wilson-cowan-kybernetic-1973} and Amari \cite{amari-cybernetics-1977}, and are known to exhibit a rich variety of phenomena 
including stationary states, traveling wave fronts, pulses and spiral waves.  For a comprehensive review of neural field equations, 
including a description of their derivation, we refer to \cite{bressloff-JPhys-2012}.

More recently several authors have become interested in stochastic versions of neural field equations (see for example \cite{ bressloff:09,bressloff:10,bressloff-webber-2012,bressloff-wilkerson:12,kilpatrick-ermentrout:13}), in order to (amongst other things) model the effects of fluctuations on wave front propagation.  
In particular, in \cite{bressloff-webber-2012} a multiplicative stochastic term is added to the neural field equation, resulting in a stochastic nonlinear integro-differential equation of the form
\begin{equation}
\label{bressloff}
dY(t, x) = \left[ - Y(t, x) + \int_{\R} w(x,y)G(Y(t, y))dy\right]dt + \sigma(Y(t, x))dW(t, x), 
\end{equation}
for $x\in \R, t\geq0$, and some functions $G$ (referred to as the nonlinear gain function), $\sigma$ (the diffusion coefficient) and $w$ (the neural field kernel, sometimes  also called the connectivity function). Here $(W(t, x))_{x\in\R, t\geq0}$ is a stochastic process (notionally a ``Gaussian random noise'') that depends on both space and time, and which may possess some spatial correlation.

In \cite{bressloff-webber-2012} \eqref{bressloff} is used in a slightly informal way to derive some interesting phenomena.  However, from a more rigorous point of view one must be careful to understand what exactly is meant by the equation \eqref{bressloff}, and indeed, what do we understand by a solution.  The main point is that any solution must involve an object of the form
\begin{equation}
\label{stoch int}
``\int \sigma(Y(t, x))dW(t, x)"
\end{equation}
which must be precisely defined.  Of course, in the case where there is no spatial dimension, the theory of such stochastic integrals is widely disseminated, but for integrals with respect to space-time white noise (for example) it is far less well-known.  It is for this reason that we believe it be extremely worthwhile making a detailed study of how to give sense to these objects, and moreover to solutions to \eqref{bressloff} when they exist.

There are in fact two distinct approaches to defining and interpreting the quantity \eqref{stoch int}, both of which allow one to build up a theory of stochastic \textit{partial} differential equations (SPDEs).  Although \eqref{bressloff} does not strictly classify as a SPDE (since there is no derivative with respect to the spatial variable), both approaches provide a rigorous underlying theory upon which to base a study of such equations.  

The first approach generalizes the theory of stochastic processes in order to give sense to solutions of SPDEs as random processes that take their values in a Hilbert space of functions (as presented by Da Prato and Zabczyk in \cite{D-Z} and more recently by Pr\'ev\^ot and R\"ockner in \cite{Rockner}).  With this approach, the quantity \eqref{stoch int} is interpreted as a \textit{Hilbert space-valued} integral i.e. ``$\int \sigma(Y(t))dW(t)$'', where $(Y(t))_{t\geq0}$ and $(W(t))_{t\geq0}$ take their values in a Hilbert space of functions, and $\sigma(Y(t))$ is an operator between Hilbert spaces.  
The second approach is that of J. B. Walsh (as described in \cite{Walsh}), which, in contrast, takes as its starting point a PDE with a random and highly irregular ``white-noise'' term.  This approach develops integration theory with respect to a class of random measures (called martingale measures), so that \eqref{stoch int} can be interpreted as a random field in both $t$ and $x$.

In the theory of SPDEs, there are advantages and disadvantages of taking both approaches. This is also the case with regards to the stochastic neural field equation \eqref{bressloff}, as described in the conclusion below (Section \ref{conclusion}), and it is for this reason that we here develop both approaches, with the view that one or other will suit a particular reader's needs.  Taking the functional approach of Da Prato and Zabczyk is perhaps more straightforward for those with knowledge of stochastic processes, and the existing general results can be applied more directly in order to obtain, for example, existence and uniqueness.  This was the path taken in \cite{RiedlerKuehn} where the emphasis was on large deviations, though in a much less general setup than we consider here (see Remark \ref{LDP}).  However, it can certainly be argued that solutions constructed in this way are ``non-physical'', since the functional theory tends to ignore any spatial regularity properties (solutions are typically $L^2$-valued in the spatial direction).  We argue that the approach of Walsh is more suited to looking for ``physical'' solutions that are at least continuous in the spatial dimension, though we must restrict slightly the type of noise that is permitted.  A comparison of the two approaches in a general setting is presented in \cite{Dalang}, and in Section \ref{sec: comparison} in our specific setting.

The main aim of this article  is thus two fold: firstly it is to present a review of an existing theory, which is accessible to readers unfamiliar with stochastic partial differential equations, that puts the study of stochastic neural field equations on a rigorous mathematical footing. 
Secondly, we will give general conditions on the functions $G$, $\sigma$ and $w$ that are certainly satisfied for most typical choices, and under which we guarantee that there exists a solution to the neural field equation \eqref{bressloff} in some sense.  We hope this will provide a reference that will pave the way for future studies (both theoretical and applied) of these equations, where basic questions of existence and uniqueness will no longer be a cause for concern.

The layout of the article is as follows.  We first present in Section \ref{EE} the necessary material in order to consider the stochastic neural field equation \eqref{bressloff} as an evolution equation in a Hilbert space.  This involves introducing the notion of a $Q$-Wiener process taking values in a Hilbert space and stochastic integration with respect to $Q$-Wiener processes, before quoting a general existence and uniqueness result for solutions of stochastic evolution equations from \cite{D-Z}.  This theorem is then applied in Section \ref{Application} to yield a unique solution to \eqref{bressloff} interpreted as a Hilbert space-valued process, both in the case when the noise has a spatial correlation and when it does not.  The second part of the paper switches tack, and describes Walsh's theory of stochastic integration with respect to martingale measures (Section \ref{Walsh}), with a view of giving sense to a solution to \eqref{bressloff} as a random field in both time and space.  To avoid dealing with distribution-valued solutions, we in fact consider a Gaussian noise that is smoothed in the spatial direction (Section \ref{smoothed noise}), and show that, under some weak conditions, the neural field equation driven by such a smoothed noise has a unique solution in the sense of Walsh that is continuous in both time and space (Section \ref{NF smoothed GN}).  We finish with a comparison of the two approaches in Section \ref{sec: comparison}, and summarize our findings in a conclusion (Section \ref{conclusion}).

\vspace{0.2cm}
\noindent \textbf{Notation:} Throughout the article $(\Omega, \cF, (\cF_t)_{t\geq 0}, \bbP)$ will be a filtered probability space, where the filtration $(\cF_t)_{t\geq 0}$ satisfies the usual conditions (i.e. complete and right-continuous), and $L^2(\Omega, \mathcal{F}, \mathbb{P})$ will be the space of square-integrable random variables on $(\Omega, \mathcal{F}, \mathbb{P})$.  We will use the standard notation $\mathcal{B}(\mathcal{T})$ to denote the Borel $\sigma$-algebra on $\mathcal{T}$ for any topological space $\mathcal{T}$.  The Lebesgue space of $p$-integrable (with respect to the Lebesgue measure) functions over $\R^N$ for $N\in\mathbb{N} = \{1, 2, \dots\}$ will be denoted by $L^p(\R^N)$, $p\geq 1$, as usual, while $L^p(\R^N, \rho)$, $p\geq 1$, will be the Lebesgue space weighted by a measurable function $\rho:\R^N\to\R^N$.

\section{Stochastic neural field equations as evolution equations in Hilbert spaces}
\label{EE}

In this section we will need the following operator spaces.  Let $U$ and $H$ be two separable Hilbert spaces. We will write $L_0(U, H)$ to denote the space of all bounded linear operators form $U$ to $H$ with the usual norm (with the shorthand $L_0(H)$ when $U=H$), and $L_2(U, H)$ for the space of all Hilbert-Schmidt operators from $U$ to $H$, i.e. those bounded linear operators $B:U \to H$ such that 
\[
\sum_{k\geq 1} \|B(e_k)\|_{H}^2 <\infty,
\]
for some (and hence all) complete orthonormal systems $\{e_k\}_{k\geq1}$ of $U$.  Finally, a bounded linear operator $Q:U \to U$ will be said to be trace-class if $\mathrm{Tr}(Q) := \sum_{k\geq 1}\langle Q (e_k), e_k\rangle_U < \infty$, again for some (and hence all) complete orthonormal systems $\{e_k\}_{k\geq1}$ of $U$.

\subsection{Hilbert space valued $Q$-Wiener processes}

Let $U$ be a separable Hilbert space and $Q:U \to U$ a non-negative, symmetric bounded linear operator on $U$ such that $\mathrm{Tr}(Q) <\infty$.
\begin{defin}
A $U$-valued stochastic process $W = (W(t))_{t\geq0}$ is called a $Q$-Wiener process on $U$ with respect to $(\cF_t)_{t\geq0}$ if: (i) $W(0) =0$; (ii) $t \mapsto W(t)$ is continuous as a map from $[0, \infty) \to U$; (iii) $(W(t))_{t\geq0}$ is adapted to $(\cF_t)_{t\geq0}$; (iv) $W$ has independent increments; and (v) for all $0\leq s\leq t$ the law of $W(t) - W(s)$ on $U$ is Gaussian with mean $0$ and covariance operator $(t-s)Q$.
\end{defin}

Since $Q$ is non-negative and trace-class, there exists a complete orthonormal basis $\{e_k\}_{k\geq 1}$ for $U$ and a sequence of non-negative real numbers $(\lambda_k)_{k\geq1}$ such that $Qe_k = \lambda_k e_k$ and 
\[
\sum_{k=1}^\infty \lambda_k < \infty.
\]
By \cite[Proposition 4.1]{D-Z}, for arbitrary $t\geq 0$, $W$ has the expansion
\begin{align}
\label{Q-Wiener expansion}
W(t) = \sum_{k=1}^\infty \sqrt{\lambda _k}\beta_k(t) e_k, 
\end{align}
where $(\beta_k(t))_{t\geq0}$, $k=1, 2, \dots$ are mutually independent standard real-valued Brownian motions on $(\Omega, \mathcal{F}, \mathbb{P})$, and the series is convergent in $L^2(\Omega, \mathcal{F}, \mathbb{P})$.

\subsection{Stochastic integration with respect to $Q$-Wiener processes}
\label{int Q-Wiener}
Again let $U$ be a separable Hilbert space, $Q:U \to U$ a non-negative, symmetric bounded linear operator on $U$ such that $\mathrm{Tr}(Q) <\infty$, and $W = (W(t))_{t\geq0}$ be a $Q$-Wiener process on $U$ with respect to $(\cF_t)_{t\geq0}$ (given by \eqref{Q-Wiener expansion}).  


Let $H$ be another separable Hilbert space, and let $Q^\frac{1}{2}(U)$ be the subspace of $U$, which is a Hilbert space under the inner product
\[
\langle u, v \rangle_{Q^\frac{1}{2}(U)} := \langle Q^{-\frac{1}{2}}u, Q^{-\frac{1}{2}}v \rangle_{U}, \quad u, v \in Q^{\frac{1}{2}}(U).
\]
The space $L_2(Q^\frac{1}{2}(U), H)$ of all Hilbert-Schmidt operators from $Q^\frac{1}{2}(U)$ into $H$ plays an important role in the theory of stochastic evolution equations, and for this reason we detail the following trivial example:
%

\begin{example}
\label{bounded is HS}
Let $B: U \to H$ be a bounded linear operator from $U$ to $H$ i.e. $B\in L_0(U, H)$.  Then 
\begin{align*}
\|B\|^2_{L_2(Q^\frac{1}{2}(U), H)} &= \sum_{k=1}^\infty\|B (Q^\frac{1}{2}(e_k))\|^2_{H}\\
&\leq \|B\|^2_{L_0(U, H)}\sum_{k=1}^\infty\|Q^\frac{1}{2}(e_k)\|^2_{U}\\
&=\|B\|^2_{L_0(U, H)}\sum_{k=1}^\infty \langle Q(e_k), e_k \rangle_U = \|B\|^2_{L_0(U, H)}{\rm Tr}(Q)  <\infty,
\end{align*}
since $\mathrm{Tr}(Q)<\infty$, where $\{e_k\}_{k\geq1}$ is again a complete orthonormal system for $U$ .  In other words $B\in L_0(U, H) \Rightarrow B\in L_2(Q^\frac{1}{2}(U), H)$.
\end{example}

Let $T>0$ be arbitrary. By following the construction detailed in Chapter 4 of \cite{D-Z}, we have that for a process $(\Phi(t))_{t\in [0, T]}$ the integral
\begin{equation}
\label{int}
\int_0^t\Phi(s)dW(s)
\end{equation}
has a sense as an element of $H$ when $\Phi(t)\in L_2(Q^\frac{1}{2}(U), H), t\in [0, T]$, is predictable (with respect to the filtration $(\mathcal{F}_t)_{t\geq0}$) and if 
\[
\mathbb{P}\left( \int_0^T \|\Phi(s)\|^2_{L_2(Q^\frac{1}{2}(U), H)} ds < \infty \right) = 1.
\]

Thus, for example, we have that
\[
\int_0^t B dW(t)
\]
has a sense in $H$ if and only if 
\[
\|B\|^2_{L_2(Q^\frac{1}{2}(U), H)} < \infty.
\]
When $B:U \to H$ is bounded, this certainly holds by the previous example.\\

\subsection{Solutions to stochastic evolution equations}

 Let $U$ and $H$ be two separable Hilbert spaces. Consider the stochastic evolution equation
  \begin{equation}
 \label{first sde}
 dY(t) =  \left(\mathbf{A}Y(t) + \mathbf{F}(t, Y(t))\right)dt  + \mathbf{B}(t, Y(t))dW(t), \qquad Y(0) = Y_0\in H,
 \end{equation}
where $W$ is a $U$-valued $Q$-Wiener process with respect to $(\cF_t)_{t\geq0}$, with $Q:U \to U$ a non-negative, symmetric bounded linear operator on $U$ such that $\mathrm{Tr}(Q) <\infty$ as above.  

We use $\mathcal{G}_t$ to denote the predictable $\sigma$-field on $[0, t]\times\Omega$ i.e. $\mathcal{G}_t$ is the $\sigma$-algebra generated by all left-continuous stochastic processes on $[0, t]$ adapted to $(\cF_s)_{s\in[0, t]}$ for all $t\geq0$.

Fix an arbitrary finite horizon $T>0$.  We work with the following hypotheses.
\begin{itemize}
\item [{\rm (H1)}] $\mathbf{A}$ is the generator of a strongly continuous semigroup $S(t) = e^{t\mathbf{A}}, t\geq 0$, in $H$.
\item [{\rm (H2)}] The mapping $\mathbf{F}: [0, T] \times \Omega\times H \to H$ is measurable from $([0, T]\times \Omega\times H, \cG_T\times \cB(H))$ into $(H, \cB(H))$.
\item [{\rm (H3)}] The mapping $\mathbf{B}:[0, T]\times\Omega\times H \to L_2(Q^\frac{1}{2}(U), H)$ is measurable from  $([0, T]\times\Omega\times H, \cG_T\times \cB(H))$ into $(L_2(Q^\frac{1}{2}(U), H), \cB(L_2(Q^\frac{1}{2}(U), H)))$.
\item [{\rm (H4)}] There exists a constant $C$ such that 
\[
\|\mathbf{F}(t, \omega, g) - \mathbf{F}(t, \omega, h)\|_H + \|\mathbf{B}(t, \omega, g) - \mathbf{B}(t, \omega, h)\|_{L_2(Q^\frac{1}{2}(U), H)} \leq C\|g-h\|_H,
\]
and 
\[
\|\mathbf{F}(t, \omega, h)\|^2_H + \|\mathbf{B}(t,\omega, h)\|^2_{L_2(Q^\frac{1}{2}(U), H)} \leq C^2(1+\|h\|^2_H),
\]
for all $g,h \in H$, $t\in[0, T]$ and $\omega\in\Omega$.
\end{itemize}

We now make precise what we mean by a \textit{mild} solution to \eqref{first sde}.

\begin{defin}[Mild solution]
\label{defin: DZ solution}
A predictable $H$-valued process $(Y(t))_{t\in [0, T]}$ is said to be a mild solution of \eqref{first sde} on $[0, T]$ if 
\begin{equation}
\label{P-finiteness}
\mathbb{P}\left(\int_0^T \|Y(s)\|^2_Hds < \infty \right) = 1
\end{equation}
and, for arbitrary $t\in [0, T]$, we have
\[
Y(t) = S(t)Y_0 + \int_0^tS(t-s)\mathbf{F}(s, Y(s))ds + \int_0^tS(t-s)\mathbf{B}(s, Y(s))dW(s), \quad \mathbb{P}-a.s.
\]
Note that under the conditions {\rm (H1)} - {\rm (H4)}, \eqref{P-finiteness} implies that the integrals in this expression are well-defined.  
\end{defin}

The following existence and uniqueness result is quoted from \cite{D-Z} (Theorem 7.4).

\begin{thm}[Da Prato - Zabczyk]
\label{D-Z:e+u}
Assume that conditions {\rm (H1)} - {\rm (H4)} are satisfied, and that $Y_0$ is an $\cF_0$-measurable $H$-valued random variable with finite $p$-moments for all $p\geq2$.  Then there exists a unique (up to equivalence in the Hilbert space $H$) mild solution $(Y(t))_{t\in[0, T]}$ of \eqref{first sde}. Moreover, it has a continuous modification.    

In addition, for all $p\geq 2$, there exists a constant $C^{(p )}_T>0$ such that 
\begin{equation}
\label{sup bound 1}
\sup_{t\in[0, T]} \mE{\|Y(t)\|_H^p} \leq C^{( p)}_{T}\left(1 + \mE{\|Y_0\|^p_H}\right),
\end{equation}
and for all $p>2$
\begin{equation}
\label{sup bound 2}
 \mE{\sup_{t\in[0, T]}\|Y(t)\|_H^p} \leq C^{(p )}_T\left(1 + \mE{\|Y_0\|^p_H}\right).
\end{equation}
\end{thm}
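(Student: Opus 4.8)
The statement is the classical existence and uniqueness theorem for semilinear stochastic evolution equations in a Hilbert space, and the plan is to obtain the mild solution as the unique fixed point of the stochastic integral operator associated with \eqref{first sde}, and then to upgrade path regularity and the moment bounds via the factorization method. Fix $p\geq 2$ and let $\cH_{p,T}$ denote the Banach space of equivalence classes of $H$-valued predictable processes $(Y(t))_{t\in[0,T]}$ with $\norm{Y}_{p,T}:=\sup_{t\in[0,T]}\left(\mE{\norm{Y(t)}_H^p}\right)^{1/p}<\infty$. On it I would define
\[
(\mathcal{K}Y)(t) := S(t)Y_0 + \int_0^t S(t-s)\mathbf{F}(s,Y(s))\dd s + \int_0^t S(t-s)\mathbf{B}(s,Y(s))\dd W(s),
\]
so that, by Definition \ref{defin: DZ solution}, a mild solution is exactly a fixed point of $\mathcal{K}$; the whole argument then reduces to showing that $\mathcal{K}$ is a well-defined strict contraction, possibly after passing to an equivalent norm or to an iterate $\mathcal{K}^n$.

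The two estimates that do the work are the following. For the deterministic convolution, (H1) gives $\sup_{t\in[0,T]}\norm{S(t)}_{L_0(H)}\leq M$ for some finite $M$, and then the linear growth half of (H4) together with Hölder's inequality bounds $\mE{\norm{\int_0^t S(t-s)\mathbf{F}(s,Y(s))\dd s}_H^p}$ by $C_T\int_0^t\bigl(1+\mE{\norm{Y(s)}_H^p}\bigr)\dd s$. For the stochastic convolution, which is \emph{not} a martingale so that Burkholder's inequality cannot be applied to it directly, I would invoke the standard $L^p$-estimate for stochastic convolutions (obtained from the factorization identity together with Burkholder's inequality for $H$-valued stochastic integrals),
\[
\mE{\norm{\int_0^t S(t-s)\mathbf{B}(s,Y(s))\dd W(s)}_H^p}\leq C_T\int_0^t\mE{\norm{\mathbf{B}(s,Y(s))}_{L_2(Q^{1/2}(U),H)}^p}\dd s,
\]
where (H3) ensures the integrand is predictable and $L_2(Q^{1/2}(U),H)$-valued, and the growth part of (H4) closes the bound. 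Together these show $\mathcal{K}:\cH_{p,T}\to\cH_{p,T}$; replacing the growth halves of (H4) by the Lipschitz halves in the same two estimates gives, for $Y^1,Y^2\in\cH_{p,T}$,
\[
\mE{\norm{(\mathcal{K}Y^1)(t)-(\mathcal{K}Y^2)(t)}_H^p}\leq C_T\int_0^t\mE{\norm{Y^1(s)-Y^2(s)}_H^p}\dd s,
\]
and iterating this inequality makes $\mathcal{K}^n$ a contraction for $n$ large, so Banach's fixed point theorem yields the unique mild solution. The bound \eqref{sup bound 1} then follows from the same estimates and Gronwall's lemma, with a constant depending only on $T,p,C,M$.

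For the continuous modification and the stronger bound \eqref{sup bound 2} I would use the factorization method: pick $\alpha\in(1/p,1/2)$, which is possible precisely because $p>2$, and write the stochastic convolution as $\tfrac{\sin\pi\alpha}{\pi}\int_0^t(t-s)^{\alpha-1}S(t-s)Z_\alpha(s)\dd s$ with $Z_\alpha(s):=\int_0^s(s-r)^{-\alpha}S(s-r)\mathbf{B}(r,Y(r))\dd W(r)$; one checks by Hölder's inequality and the bound on $\mathbf{B}$ (using already \eqref{sup bound 1}) that $Z_\alpha$ has $p$-integrable paths almost surely, and then uses the fact that the operator $g\mapsto\int_0^{\cdot}(\cdot-s)^{\alpha-1}S(\cdot-s)g(s)\dd s$ maps $L^p([0,T];H)$ boundedly into $C([0,T];H)$. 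Since $t\mapsto S(t)Y_0$ and the deterministic convolution are already continuous, this produces the continuous modification; taking $p$-th moments of the supremum through the same representation and applying Burkholder's inequality to $Z_\alpha$ then yields \eqref{sup bound 2}.

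The main difficulty is concentrated entirely in the stochastic convolution term: because it is not a martingale one cannot apply martingale inequalities to it directly, and securing both the $L^p$-bound needed for the contraction and the supremum-in-time bound needed for the continuous modification and \eqref{sup bound 2} rests on the factorization method, which is also exactly what forces the restriction $p>2$ in \eqref{sup bound 2}.
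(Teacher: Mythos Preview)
Your outline is correct and is precisely the standard argument from Da Prato and Zabczyk's monograph (fixed point in $\cH_{p,T}$ for existence, uniqueness and \eqref{sup bound 1}; factorization for the continuous modification and \eqref{sup bound 2}). Note, however, that the paper does not give its own proof of this theorem: it is simply quoted from \cite[Theorem 7.4]{D-Z}, so there is nothing to compare against beyond the reference itself.
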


\subsection{The stochastic neural field equation: existence and uniqueness of a Hilbert-space valued solution}
\label{Application}
In this section we describe our precise interpretation of the stochastic neural field equation \eqref{bressloff} in the language of Hilbert space valued stochastic evolution equations (equation \eqref{NF SEE} below), and study existence and uniqueness properties of this equation.  Note that as opposed to \eqref{bressloff}, we here work in the more general setup when the underlying space is $N$-dimensional.

Let $\rho:\R^N\to\R^N$ be in $L^\infty({\R^N})$.  Consider the stochastic evolution equation
\begin{equation}
\label{NF SEE}
 dY(t) =  \left(-Y(t) + \mathbf{F}(Y(t))\right)dt  + \sigma(Y(t))\circ BdW(t), \quad Y(0)= Y_0 \in  L^2(\R^N, \rho),
 \end{equation}
where $\circ$ indicates the composition of operators, $W$ is an $L^2(\R^N)$-valued $Q$-Wiener process with respect to $(\cF_t)_{t\geq0}$, with $Q$ a non-negative, symmetric bounded linear operator on $L^2(\R^N)$ such that $\mathrm{Tr}(Q) <\infty$, as usual.  
Here
\begin{itemize}
 \item $B:L^2(\R^N) \to L^2(\R^N,\rho)$ is defined by
 \begin{align}
\label{B}
B(u)(x) = \int_{\R^N} \varphi(x-y)u(y)dy, \qquad x\in\R^N,\ u\in L^2(\R^N),
\end{align}
for some $\varphi \in L^1(\R^N)$\footnote{This is well-defined since $\| B(u) \|_{L^2(\R^N,\rho)} \leq \| \rho \|_{L^\infty(\R^N)}^{1/2}\|u\|_{L^2(\R^N)} \|\varphi \|_{L^1(\R^N)}$, for all $u\in L^2(\R^N)$.};
\item $\sigma: L^2(\R^N, \rho) \to L_0(L^2(\R^N, \rho))$ is such that
\[
\|\sigma(g) - \sigma(h)\|_{L_0(L^2(\R^N, \rho))} \leq C_\sigma\|g-h\|_{L^2(\R^N, \rho)}, \qquad g, h \in L^2(\R^N, \rho);
\]
\item $\mathbf{F}$ is an operator on $L^2(\R^N, \rho)$ defined by
\begin{align}
\label{F}
\mathbf{F}(h)(x) = \int_{\R^N}w(x, y)G(h(y))dy, \quad x\in \R^N,\ h\in L^2(\R^N, \rho),
\end{align}
where $w:\R^N\times\R^N\to\R$ is the neural field kernel, and $G:\R \to \R$ is the nonlinear gain function, assumed to be bounded and globally Lipschitz i.e such that there exists a constant $C_G$ with $\sup_{a\in\R}|G(a)| \leq C_G$  and 
\[
|G(a) - G(b)| \leq C_G|x- y|, \qquad \forall a, b \in \R.
\]
 \end{itemize}

Typically the nonlinear gain function $G$ is taken to be a sigmoid function, for example $G(a) = (1+e^{-a})^{-1}$, $a\in\R$.  

Of particular interest to us are the conditions on the neural field kernel $w$ which will allow us to prove existence and uniqueness of a solution taking its values in the space $L^2(\R^N, \rho)$ for some $\rho$ through Theorem \ref{D-Z:e+u}.

In \cite[footnote 1] {RiedlerKuehn} it is suggested that the condition 
\begin{equation}\label{eq:C1}
\tag{\textbf {C1}}
\int_{\R^N}\int_{\R^N} |w(x, y)|^2 dx dy <\infty
\end{equation}
together with symmetry of $w$ is enough to ensure that there exists a unique $L^2(\R^N)$-valued solution to \eqref{NF SEE}.  However, the problem is that it does not follow from (\textbf {C1}) that the operator $\mathbf{F}$ is stable on the space $L^2(\R^N)$.  For instance, suppose that in fact $G \equiv 1$ (so that $G$ is trivially globally Lipschitz).  Then for $h\in L^2(\R^N)$ (and assuming $w\geq0$) we have that
\begin{equation}
\label{F stable}
\|\textbf{F}(h)\|^2_{L^2(\R^N)} = \int_{\R^N}\|w(x, \cdot)\|_{L^1(\R^N)}^2dx.
\end{equation}

The point is that we can chose positive $w$ such that (\textbf {C1}) holds, while \eqref{F stable} is not finite.  For example in the case $N=1$ we could take $w(x, y) = (1+|x|)^{-1}(1+|y|)^{-1}$ for $x, y \in \R$.  In such a case the equation \eqref{NF SEE} is ill-posed: if $Y(t)\in L^2(\R)$ then $F(t, Y(t))$ is not guaranteed to be in $L^2(\R)$, which in turn implies that $Y(t)\not\in L^2(\R)$!

With this in mind we argue two points.  Firstly, if we want a solution in $L^2({\R^N})$, we must make the additional strong assumption that
\begin{equation}\label{eq:C2}
\tag{\textbf {C2}}
\forall x\in{\R^N}\ (y\mapsto w(x, y))\in L^1({\R^N}), \quad \mathrm{and}\quad \|w(x, \cdot)\|_{L^1({\R^N})} \in L^2({\R^N}).
\end{equation}
Indeed, below we will show that $\textbf{(C1)}$ together with $\textbf{(C2)}$ are enough to yield the existence of a unique $L^2({\R^N})$-valued solution to \eqref{NF SEE}.  

On the other hand, if we don't want to make the strong assumptions that (\textbf{C1}) and (\textbf{C2}) hold, then we have to work instead in a weighted space $L^2({\R^N}, \rho)$, in order to ensure that $\mathbf{F}$ is stable.  
In this case, we will see that if
\begin{equation}\label{eq:C1'}
\tag{\textbf {C1'}}
\exists\ \rho_w\in L^1({\R^N})\cap L^\infty({\R^N}),\quad \mathrm{s.t.}\quad \int_{\R^N} |w(x, y)|\rho_w(x)dx \leq \Lambda_w\rho_w(y)\ \forall y\in{\R^N},
\end{equation}
for some $\Lambda_w>0$, and
\begin{equation}\label{eq:C2'}
\tag{\textbf {C2'}}
\forall x\in{\R^N}\ (y\mapsto w(x, y))\in L^1({\R^N}), \quad \mathrm{and}\quad \sup_{x\in{\R^N}}\|w(x,\cdot)\|_{L^1({\R^N})} \leq C_w
\end{equation}
for some constant $C_w$, then we can prove the existence of a unique $L^2({\R^N}, \rho_w)$-valued solution to \eqref{NF SEE}.  

Condition $(\textbf{C1'})$ is in fact a non-trivial eigenvalue problem, and it is not straightforward to see whether it is satisfied for a given function $w$.  However, we chose to state the theorem below in a general way, and then below provide some important examples of when it can be applied.

In particular, one case of interest is when $w$ is homogeneous i.e. $w(x, y) = w(x-y)$ for all $x, y \in {\R^N}$, with $w\in L^1({\R^N})$.  This is an especially important case, since the homogeneity of $w$ is a very common assumption that is made in the literature (see for example \cite{bressloff-folias:04, bressloff-webber-2012, bressloff-wilkerson:12, folias-bressloff:04, kilpatrick-ermentrout:13, owen-laing-etal:07}).  However, when $w$ is homogeneous it is clear that neither (\textbf{C1}) nor (\textbf{C2}) are satisfied, and so we instead must try to show that (\textbf{C1'}) is satisfied ((\textbf{C2'}) trivially holds).  This is done in the second example below.

\begin{rem}
If we replace the spatial coordinate space ${\R^N}$ by a bounded domain $\mathcal{D}\subset {\R^N}$, so that the neural field equation \eqref{NF SEE} describes the activity of a neuron found at position $x\in\mathcal{D}$ then these kinds of issues do not come into play, and everything becomes rather trivial (under appropriate boundary conditions). Indeed, in this case one can then check the conditions of Theorem \ref{D-Z:e+u} to see that there exists a unique $L^2(\mathcal{D})$-valued solution to \eqref{NF SEE} under the condition {\rm ({\bf C2'})} only (with ${\R^N}$ replaced by $\mathcal{D}$).
Although working in a bounded domain seems more physical (since any physical section of cortex is clearly bounded), the unbounded case is still often used, see \cite{bressloff-webber-2012} or the review \cite{bressloff-JPhys-2012}, and is mathematically more interesting.  The problem in passing to the unbounded case stems from the fact that the nonlocal term in \eqref{NF SEE} naturally `lives' in the space of bounded functions, while the noise naturally lives in an $L^2$ space.  These are not compatible when the underlying space is unbounded.
\end{rem}

\begin{thm}
\label{NF SEE:e+u}
Suppose that the neural field kernel $w$ either
\begin{itemize}
\item[(i)] satisfies conditions {\rm (\textbf{C1})} and {\rm (\textbf{C2})}; or 
\item[(ii)] satisfies conditions {\rm (\textbf{C1'})} and {\rm (\textbf{C2'})}.  
\end{itemize}
If (i) holds set $\rho_w\equiv 1$, while if (ii) holds let $\rho_w$ be the function appearing in condition {\rm (\textbf{C1'})}.  

Then, whenever $Y_0$ is an $\mathcal{F}_0$-measurable $L^2({\R^N}, \rho_w)$-valued random variable with finite $p$-moments for all $p\geq2$, the neural field equation \eqref{NF SEE} has a unique mild solution taking values in the space $L^2({\R^N}, \rho_w)$.  To be precise, there exists a unique $L^2({\R^N}, \rho_w)$-valued process $(Y(t))_{t\geq0}$ such that for all $T>0$
\[
\mathbb{P}\left(\int_0^T \|Y(s)\|^2_{L^2({\R^N}, \rho_w)}ds < \infty \right) = 1
\]
and,
\[
Y(t) = e^{-t}Y_0 + \int_0^te^{-(t-s)}\mathbf{F}(Y(s))ds +\int_0^te^{-(t-s)} \sigma(Y(s))\circ BdW(s), \quad \mathbb{P}-a.s.
\]
Moreover, $(Y(t))_{t\geq0}$ has a continuous modification, and satisfies the bounds \eqref{sup bound 1} and \eqref{sup bound 2} for every $T>0$ (with $H = L^2(\R^N, \rho_w)$).
\end{thm}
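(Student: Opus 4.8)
The plan is to recast \eqref{NF SEE} in the abstract form \eqref{first sde} and apply Theorem \ref{D-Z:e+u}. Take $U = L^2(\R^N)$, $H = L^2(\R^N, \rho_w)$, $\mathbf{A} = -\mathrm{Id}$, $\mathbf{F}$ as in \eqref{F}, and $\mathbf{B}(t, \omega, h) = \sigma(h)\circ B$ (depending on neither $t$ nor $\omega$). Condition (H1) is immediate, since $-\mathrm{Id}$ generates the strongly continuous semigroup $S(t) = e^{-t}\mathrm{Id}$ on $H$. For (H3): by the footnote following \eqref{B} one has $B \in L_0(U, H)$ (note $\rho_w \in L^\infty(\R^N)$ in both cases (i) and (ii)), and $\sigma(h) \in L_0(H)$ by hypothesis, so $\sigma(h)\circ B \in L_0(U, H)$, whence $\sigma(h)\circ B \in L_2(Q^{1/2}(U), H)$ by Example \ref{bounded is HS}; measurability in $h$ follows from the Lipschitz continuity of $\sigma$. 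The inequality in Example \ref{bounded is HS} also settles the $\mathbf{B}$-parts of (H4): $\|(\sigma(g) - \sigma(h))\circ B\|_{L_2(Q^{1/2}(U), H)} \le \|B\|_{L_0(U,H)}\sqrt{\mathrm{Tr}(Q)}\,\|\sigma(g) - \sigma(h)\|_{L_0(H)} \le C_\sigma\|B\|_{L_0(U,H)}\sqrt{\mathrm{Tr}(Q)}\,\|g-h\|_H$, and the linear growth follows by writing $\sigma(h) = (\sigma(h) - \sigma(0)) + \sigma(0)$.

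The substantive part is showing that $\mathbf{F}$ is a well-defined map $H \to H$ that is globally Lipschitz, i.e. (H2) and the $\mathbf{F}$-parts of (H4); this is where the hypotheses on $w$ are used, and the two cases must be handled separately. In case (ii), I would start from $|\mathbf{F}(g)(x) - \mathbf{F}(h)(x)| \le C_G \int_{\R^N}|w(x,y)|\,|g(y) - h(y)|\,dy$, apply the Cauchy--Schwarz inequality with respect to the measure $|w(x,y)|\,dy$ to obtain $|\mathbf{F}(g)(x) - \mathbf{F}(h)(x)|^2 \le C_G^2\,\|w(x,\cdot)\|_{L^1(\R^N)}\int_{\R^N}|w(x,y)|\,|g(y) - h(y)|^2 \,dy$, bound $\|w(x,\cdot)\|_{L^1(\R^N)} \le C_w$ via (\textbf{C2'}), then multiply by $\rho_w(x)$, integrate in $x$, and use Fubini together with (\textbf{C1'}) to arrive at $\|\mathbf{F}(g) - \mathbf{F}(h)\|_H^2 \le C_G^2 C_w\Lambda_w\|g - h\|_H^2$; an analogous computation using $|G| \le C_G$ in place of the Lipschitz bound shows $\|\mathbf{F}(h)\|_H^2 \le C_G^2 C_w \Lambda_w\|\rho_w\|_{L^1(\R^N)}$, a finite constant, which simultaneously gives that $\mathbf{F}$ maps $H$ into $H$ and the required growth bound. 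In case (i) ($\rho_w \equiv 1$), I would instead apply Cauchy--Schwarz in the form $|\mathbf{F}(g)(x) - \mathbf{F}(h)(x)|^2 \le C_G^2\|w(x,\cdot)\|_{L^2(\R^N)}^2\,\|g-h\|_{L^2(\R^N)}^2$ and integrate in $x$, invoking (\textbf{C1}) to control $\int_{\R^N}\|w(x,\cdot)\|_{L^2(\R^N)}^2 \,dx$; boundedness of $G$ and (\textbf{C2}) give $\|\mathbf{F}(h)\|_{L^2(\R^N)}^2 \le C_G^2\int_{\R^N}\|w(x,\cdot)\|_{L^1(\R^N)}^2 \,dx < \infty$. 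In both cases $\mathbf{F}$ is (Lipschitz) continuous, hence Borel measurable, which gives (H2).

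With (H1)--(H4) verified and $Y_0$ possessing finite $p$-moments for all $p \ge 2$ by assumption, Theorem \ref{D-Z:e+u} applies on each finite horizon $[0, T]$ and yields a unique mild solution valued in $H = L^2(\R^N, \rho_w)$ satisfying the stated integral equation, together with a continuous modification and the moment bounds \eqref{sup bound 1}--\eqref{sup bound 2}; taking $T = 1, 2, \dots$ and using uniqueness to glue the solutions consistently produces the process $(Y(t))_{t\ge 0}$ in the statement. I expect the only genuine obstacle to be the case-(ii) stability/Lipschitz estimate for $\mathbf{F}$: one must arrange the iterated Cauchy--Schwarz and Fubini steps precisely so that the eigenfunction-type inequality (\textbf{C1'}) can be brought to bear. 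It is exactly the breakdown of this estimate on the unweighted space $L^2(\R^N)$ — illustrated in the discussion preceding the theorem by $w(x,y) = (1+|x|)^{-1}(1+|y|)^{-1}$ — that necessitates the weight $\rho_w$.
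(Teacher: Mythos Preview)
Your proposal is correct and follows essentially the same route as the paper: recast \eqref{NF SEE} in the abstract form \eqref{first sde} with $U=L^2(\R^N)$, $H=L^2(\R^N,\rho_w)$, $\mathbf{A}=-\mathrm{Id}$, verify (H1)--(H4) using Example~\ref{bounded is HS} for the noise term and the case-by-case Cauchy--Schwarz/Fubini argument (invoking (\textbf{C1}) in case~(i) and (\textbf{C1'})--(\textbf{C2'}) in case~(ii)) for $\mathbf{F}$, then apply Theorem~\ref{D-Z:e+u}. Your explicit mention of the gluing over $T=1,2,\dots$ and of measurability via Lipschitz continuity are small refinements the paper leaves implicit, and the constant $C_G^2 C_w\Lambda_w$ in your stability bound for $\mathbf{F}$ in case~(ii) should read $C_G^2 C_w^2$ (the direct bound $|\mathbf{F}(h)(x)|\le C_G\|w(x,\cdot)\|_{L^1}$ suffices there), but this is cosmetic.
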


\begin{proof}
We check the hypotheses (H1)-(H4) in both cases $(i)$ and $(ii)$ in order to be able to apply Theorem \ref{D-Z:e+u}, with $U=L^2({\R^N})$ and $H=L^2({\R^N}, \rho_w)$.

\vspace{0.2cm}
\noindent (H1):  In our case $\mathbf{A} = -\mathbf{Id}$, and so (H1) is trivially satisfied in both cases.
 
\vspace{0.2cm}
\noindent (H2):  We check that the function $\mathbf{F}: L^2({\R^N},\rho_w) \to L^2({\R^N},\rho_w)$.  In case $(i)$ this holds since $\rho_w\equiv 1$ and for any $h\in L^2({\R^N})$
\begin{align*}
\|\mathbf{F}(h)\|^2_{ L^2({\R^N})} &= \int_{{\R^N}}\left|\int_{{\R^N}}w(x,y)G(h(y))dy\right|^2dx\\
&\leq C_G^2 \int_{{\R^N}}\|w(x, \cdot)\|_{L^1({\R^N})}^2dx < \infty,
\end{align*}
by assumption (\textbf{C2}).  Similarly in case $(ii)$ for any $h\in L^2({\R^N}, \rho_w)$
\begin{align*}
\|\mathbf{F}(h)\|^2_{ L^2({\R^N}, \rho_w)} &= \int_{{\R^N}}\left|\int_{{\R^N}}w(x,y)G(h(y))dy\right|^2 \rho_w(x)dx\\
&\leq C_G^2 \sup_{x\in{\R^N}}\|w(x, \cdot)\|_{L^1({\R^N})}^2\|\rho_w\|_{L^1({\R^N})} < \infty.
\end{align*}
Hence in either case $\mathbf{F}$ in fact maps $L^2({\R^N}, \rho_w)$ into a metric ball in $L^2({\R^N}, \rho_w)$. 

\vspace{0.2cm}
\noindent (H3): To show (H3) in both cases it suffices to check that for any $h\in L^2({\R^N}, \rho_w)$ the operator $\sigma(h)\circ B$ 
is in the space $L_2(Q^\frac{1}{2}(U), H)$.  We know by Example \ref{bounded is HS} that it suffices to prove that $\sigma(h)\circ B:L^2({\R^N}) \to L^2({\R^N}, \rho_w)$ is bounded for any $h\in L^2({\R^N}, \rho_w)$.  To this end, for any $u\in L^2({\R^N})$, we have by definition
\begin{align*}
\|\sigma(h)\circ B(u)\|^2_{L^2({\R^N}, \rho_w)} &\leq \|\sigma(h)\|^2_{L_0(L^2({\R^N}, \rho_w))} \|B(u)\|^2_{L^2({\R^N}, \rho_w)} \\
&= \|\sigma(h)\|^2_{L_0(L^2({\R^N}, \rho_w))}\int_{\R^N} | B(u)(x)|^2\rho_w(x)dx\\
&=   \|\sigma(h)\|^2_{L_0(L^2({\R^N}, \rho_w))}\int_{\R^N} \left(\int_{{\R^N}} \varphi(x-y)u(y)dy\right)^2\rho_w(x)dx \\
& \leq \|\sigma(h)\|^2_{L_0(L^2({\R^N}, \rho_w))}\|\rho_w\|_\infty  \|\varphi\|^2_{L^1({\R^N})} \|u\|^2_{L^2({\R^N})}<\infty,
\end{align*}
since $\rho_w$ is bounded (in either case) and $\varphi\in L^1({\R^N})$.

\vspace{0.2cm}
\noindent (H4):  To show (H4), we first want $\mathbf{F}: L^2({\R^N}, \rho_w) \to L^2({\R^N}, \rho_w)$ to be globally Lipschitz.  To this end, for any $g, h\in L^2({\R^N}, \rho_w)$, we see that in either case
\begin{align*}
&\|\mathbf{F}(g) - \mathbf{F}(h)\|_{L^2({\R^N}, \rho_w)}^2 = \int_{{\R^N}}|\mathbf{F}(g) - \mathbf{F}(h)|^2(x)\rho_w(x)dx\\
&\qquad \leq \int_{\R^N}\left(\int_{\R^N}|w(x,y)|\left|G(g(y)) - G(h(y))\right|dy \right)^2\rho_w(x)dx\\
&\qquad \leq C^2_G \int_{\R^N}\left(\int_{\R^N}|w(x,y)|\left|g(y) - h(y)\right|dy \right)^2\rho_w(x)dx,
\end{align*}
where we have used the Lipschitz property of $G$.
Now in case $(i)$ it clearly follows from the Cauchy-Schwartz inequality that
\begin{align*}
\|\mathbf{F}(g) - \mathbf{F}(h)\|_{L^2({\R^N})}^2  \leq C^2_G \left(\int_{\R^N}\int_{\R^N} |w(x, y)|^2 dx dy \right) \left\|g - h\right\|_{L^2({\R^N})},
\end{align*}
so that by condition (\textbf{C1}), $\mathbf{F}$ is indeed Lipschitz. 

In case (ii), by  Cauchy-Schwartz and the specific property of $\rho_w$ given by (\textbf{C1'}), we see that
\begin{align*}
&\|\mathbf{F}(g) - \mathbf{F}(h)\|_{L^2({\R^N}, \rho_w)}^2  \\
&\qquad \leq C^2_G \sup_{x\in{\R^N}}\|w(x, \cdot)\|_{L^1({\R^N})} \int_{\R^N} \left|g(y) - h(y)\right|^2\left(\int_{\R^N} |w(x,y)| \rho_w(x)dx\right)dy\\
& \qquad\leq C^2_G\Lambda_w \sup_{x\in{\R^N}}\|w(x, \cdot)\|_{L^1({\R^N})} \|g - h\|_{L^2({\R^N}, \rho_w)}^2,
\end{align*}
so that again $\mathbf{F}$ is Lipschitz. 

The final step is to show that $\sigma(\cdot)\circ B: H\to L_2(Q^\frac{1}{2}(U), H)$ with $U=L^2(\R^N)$ and $H=L^2(\R^N, \rho_w)$ is Lipschitz.  Again, by Example \ref{bounded is HS} we have for any $g, h\in H$
\begin{align*}
\left\|\sigma(g)\circ B - \sigma(h)\circ B \right\|^2_{L_2(Q^\frac{1}{2}(U), H)} &\leq \mathrm{Tr}(Q)\|\sigma(g)\circ B - \sigma(h)\circ B \|^2_{L_0(U, H)}\\
&\leq \mathrm{Tr}(Q)\|\sigma(g) - \sigma(h)\|^2_{L_0(H)}\|B\|^2_{L_0(U, H)}\\
&\leq C_\sigma^2 \mathrm{Tr}(Q)\|B\|^2_{L_0(U, H)}\|g - h\|^2_{H},
\end{align*}
where $\|B\|_{L_0(U, H)}$ is finite since $\rho_w$ is bounded (in either case).
\end{proof}

\vspace{0.5cm}
As mentioned we now present two important cases where the conditions (\textbf{C1'}) and (\textbf{C2'}) are satisfied.

\vspace{0.5cm}
\noindent\textbf{Example 1: $|w|$ defines a compact integral operator.}
Suppose that 
\begin{itemize}
\item given $\varepsilon > 0$, there exists $\delta>0$ and $R>0$ such that for all $\theta\in{\R^N}$ with $|\theta|<\delta$
\begin{itemize}
\item[(i)] for almost all $x\in{\R^N}$,
\[
\int_{{\R^N}\backslash B(0, R)} |w(x, y)| dy < \varepsilon, \qquad \int_{{\R^N}}|w(x, y+\theta)-w(x, y)|dy < \varepsilon,
\]
\item[(ii)] for almost all $y\in{\R^N}$,
\[
\int_{{\R^N}\backslash B(0, R)} |w(x, y)| dx < \varepsilon, \qquad \int_{{\R^N}}|w(x+\theta, y)-w(x, y)|dx < \varepsilon,
\]
\end{itemize}
where $B(0, R)$ denotes the ball of radius $R$ in ${\R^N}$ centered at the origin;
\item There exists a bounded subset $\Omega \subset {\R^N}$ of positive measure such that
\[
\inf_{y \in\Omega}\int_\Omega |w(x, y)|dx >0, \qquad \mathrm{or} \qquad \inf_{x \in\Omega}\int_\Omega |w(x, y)|dy >0;
\]
\item $w$ satisfies (\textbf{C2'}) and moreover
\[
\forall y\in{\R^N}\ (x\mapsto w(x, y))\in L^1({\R^N}), \quad \mathrm{and}\quad \sup_{y\in{\R^N}}\|w(\cdot, y)\|_{L^1({\R^N})} < \infty.
\]
\end{itemize}
We claim that these assumptions are sufficient for (\textbf{C1'}) so that we can apply Theorem \ref{NF SEE:e+u} in this case.
Indeed, let $\mathbb{X}$ be the Banach space of functions in $L^1({\R^N})\cap L^\infty({\R^N})$ equipped with the norm $\|\cdot\|_\mathbb{X} = \max\{\|\cdot\|_{L^1({\R^N})}, \|\cdot\|_{L^\infty({\R^N})}\}$.
Thanks to the last point above, we can well-define the map $J:\mathbb{X} \to \mathbb{X}$ by
\[
Jh(y) = \int_{\R^N} |w(x, y)|h(x)dx, \quad h\in \mathbb{X}.
\] 
Moreover, it follows from \cite[Corollary 5.1]{Eveson} that the first condition we have here imposed on $w$ is in fact necessary and sufficient for both the operators $J:L^1({\R^N})\to L^1({\R^N})$ and $J:L^\infty({\R^N})\to L^\infty({\R^N})$ to be compact.  We therefore clearly also have that the condition is necessary and sufficient for the operator $J:\mathbb{X}\to\mathbb{X}$ to be compact.

Note now that the space $\mathbb{K}$ of positive functions in $\mathbb{X}$ is a cone in $\mathbb{X}$ such that $J(\mathbb{K}) \subset \mathbb{K}$, and that the cone is \textit{reproducing} (i.e. $\mathbb{X} = \{f - g: f, g \in \mathbb{K}\}$). If we can show that $r(J)$ is strictly positive, we can thus finally apply the Krein-Rutman Theorem (see for example \cite[Theorem 1.1]{Du}) to see that $r(J)$ is an eigenvalue with corresponding non-zero eigenvector $\rho\in\mathbb{K}$.

To show that $r(J)>0$, suppose first of all that there exists a bounded $\Omega \subset {\R^N}$ of positive measure such that $\inf_{y \in\Omega}\int_\Omega |w(x, y)|dx >0$. Define $h=1$ on $\Omega$, $0$ elsewhere, so that $\|h\|_{\mathbb{X}} = \max\{1, |\Omega|\}$.
Then, trivially,
\begin{align*}
\|Jh\|_{\mathbb{X}} \geq \sup_{y\in{\R^N}} \int_\Omega |w(x, y)|dx  & \geq \inf_{y\in\Omega}\int_{\Omega} |w(x, y)|dx =: m >0,
\end{align*}
by assumption.  Replacing $h$ by $\tilde{h} = h/ \max\{1, |\Omega|\}$ yields $\|\tilde{h}\|_\mathbb{X}=1$ and 
\[
\|J\tilde{h}\|_\mathbb{X} \geq m/  \max\{1, |\Omega|\}.
\]
Thus $\|J\| \geq m/ \max\{1, |\Omega|\}$.
Similarly
\begin{align*}
\|J^2{h}\|_{\mathbb{X}} &\geq \sup_{y\in{\R^N}}\int_{\R^N} |w(x_1, y)| \left(\int_{\Omega}|w(x_2, x_1)|dx_2\right) dx_1\\
&\geq \int_{\R^N} |w(x_1, y)| \left(\int_{\Omega}|w(x_2, x_1)|dx_2\right) dx_1, \qquad \forall y\in {\R^N}\\
&\geq  \inf_{x_1\in\Omega}\left(\int_{\Omega}|w(x_2, x_1)|dx_2\right) \int_\Omega |w(x_1, y)| dx_1, \qquad \forall y\in {\R^N}.
\end{align*}
Therefore 
\begin{align*}
\|J^2{h}\|_{\mathbb{X}} 
 \geq m^2,
\end{align*}
so that $\|J^2\|\geq m^2/\max\{1, |\Omega|\}$.  In fact we have $\|J^k\|\geq m^k/\max\{1, |\Omega|\}$ for all $k\geq1$, so that, by the spectral radius formula, $r(J)\geq m>0$.  The case where $\inf_{x \in\Omega}\int_\Omega |w(x, y)|dy >0$ holds instead is proved similarly, by instead taking $h = 1/|\Omega|$ on $\Omega$ ($0$ elsewhere) and working with the $L^1({\R^N})$ norm of $Jh$ in place of the $L^\infty({\R^N})$ norm.

We have thus found a non-negative, non-zero function $\rho = \rho_w \in L^1({\R^N})\cap L^\infty({\R^N})$ such that
\[
\int_{\R^N} |w(x, y)|\rho_w(x)dx = r(J)\rho_w(y), \qquad \forall y\in{\R^N},
\]
so that (\textbf{C1'}) is satisfied.

\vspace{0.5cm}
\noindent\textbf{Example 2: Homogeneous case.}  Suppose that
\begin{itemize}
\item $w$ is homogeneous i.e $w(x,y) = w(x-y)$ for all $x, y\in {\R^N}$;
\item $w\in L^1({\R^N})$ and is continuous;
\item $\int_{\R^N}|x|^{2N}|w(x)|dx <\infty$.
\end{itemize}
\label{page:example2}
These conditions are satisfied for many typical choices of the neural field kernel in the literature (e.g. the ``Mexican hat'' kernel \cite{bressloff-cowan-etal:01, faye-chossat-etal:11, owen-laing-etal:07,veltz-faugeras:10}).  However, it is clear that we are not in the case of the previous example, since for any $R>0$
\[
\sup_{x\in{\R^N}}\int_{{\R^N}\backslash B(0, R)}|w(x-y)|dy = \|w\|_{L^1({\R^N})},
\]
which is not uniformly small.
We thus again show that (\textbf{C1'}) is satisfied in this case so that (since (\textbf{C2'}) is trivially satisfied)  Theorem \ref{NF SEE:e+u} yields the existence of a unique $L^2({\R^N}, \rho_w)$-valued solution to \eqref{NF SEE}.

In order to do this, we use the Fourier transform.  Let $v = |w|$, so that $v$ is continuous and in $L^1({\R^N})$.  Let $\mathfrak{F}v$ be the Fourier transform of $v$ i.e.
\[
\mathfrak{F}v (\xi) := \int_{\R^N} e^{-2\pi i x.\xi}v(x)dx,  \quad \xi\in {\R^N}.
\]
Therefore $\mathfrak{F}v$ is continuous and bounded by
\[
\sup_{\xi\in{\R^N}}|\mathfrak{F}v(\xi)| \leq \|v\|_{L^1({\R^N})} = \|w\|_{L^1({\R^N})}.
\]
Now let $\Lambda_w = \|w\|_{L^1({\R^N})} +1$, and $z(x) := e^{-|x|^2/2}$, $x\in {\R^N}$, so that $z$ is in the Schwartz space of smooth rapidly decreasing functions, which we denote by $\cS({\R^N})$.  Then define
\[
\hat{\rho}(\xi) := \frac{\mathfrak{F}z(\xi)}{\Lambda_w - \mathfrak{F}v(\xi)}.
\]

We note that the denominator is continuous and strictly bounded away from $0$ (indeed by construction $\Lambda_w- \mathfrak{F}v(\xi) \geq 1$ for all $\xi\in{\R^N}$).  Thus $\hat{\rho}$ is continuous, bounded and in $L^1({\R^N})$ (since $\mathfrak{F}z\in \cS({\R^N})$ by the standard stability result for the Fourier transform on $\cS({\R^N})$).

We now claim that $\mathfrak{F}^{-1}\hat{\rho}(x) \in L^1({\R^N})$, where the map $\mathfrak{F}^{-1}$ is defined by

\[
\mathfrak{F}^{-1}g(x) := \int_{\R^N} e^{2\pi i x.\xi}g(\xi)d\xi, \quad g\in L^1({\R^N}).
\]
Indeed, we note that for any $k\in\{1, \dots, N\}$,
\[
\partial^{2N}_{k}\mathfrak{F}v(\xi) =  (-2\pi i)^{2N} \int_{\R^N}  e^{-2\pi i x.\xi}x^{2N}_kv(x)dx,
\]
which is well-defined and bounded thanks to our assumption on the integrability of $x\mapsto |x|^{2N}|w(x)|$.  Since $\mathfrak{F} z$ is rapidly decreasing, we can thus see that the function $\hat{\rho}(\xi)$ is $2N$ times differentiable with respect to every component and $\partial^{2N}_{k}\hat{\rho}(\xi)$ is absolutely integrable for every $k\in\{1, \dots N\}$.  Finally, since $\mathfrak{F}^{-1}(\partial^{2N}_k\hat{\rho})(x) = (2\pi i)^{2N} x_k^{2N}\mathfrak{F}^{-1}\hat{\rho}(x)$ for each $k\in\{1, \dots, N\}$, we have that
\[
|\mathfrak{F}^{-1}\hat{\rho}(x)| \leq \frac{\sum_{k=1}^N |\mathfrak{F}^{-1}(\partial^{2N}_k\hat{\rho})(x)|}{(2\pi)^{2N} \sum_{k=1}^Nx_k^{2N}} \leq \frac{N^{N-1}\sum_{k=1}^N \|\partial^{2N}_k\hat{\rho}\|_{L^1(\R^N)}}{(2\pi)^{2N} |x|^{2N}},
\]
for all $x\in\R^N$.  Thus there exists a constant $K$ such that $|\mathfrak{F}^{-1}\hat{\rho}(x)| \leq K/|x|^{2N}$.  Moreover, since we also have the trivial bound
\[
|\mathfrak{F}^{-1}\hat{\rho}(x)| \leq \|\hat{\rho}\|_{L^1({\R^N})},
\]
for all $x\in {\R^N}$, it follows that $|\mathfrak{F}^{-1}\hat{\rho}(x)| \leq K/(1+|x|^{2N})$, by adjusting the constant $K$.  Since this is integrable over $\R^N$, the claim is proved.



Now, by the classical Fourier Inversion Theorem (which is applicable since $\hat{\rho}$ and $\mathfrak{F}^{-1}\hat{\rho}$ are both in $L^1({\R^N})$), we thus have that
\[
\mathfrak{F}\left(\mathfrak{F}^{-1}\hat{\rho}\right)(\xi) = \hat{\rho}(\xi),
\]
for all $\xi\in {\R^N}$. 

By setting $\rho(x) = \mathfrak{F}^{-1}\hat{\rho}(x)$, we see that
\[
\Lambda_w\mathfrak{F}\rho(\xi) - \mathfrak{F}\rho(\xi)\mathfrak{F}v(\xi) :=\mathfrak{F}z(\xi).
\] 
We may finally again apply the inverse Fourier transform $\mathfrak{F}^{-1}$ to both sides, so that by the Inversion Theorem again (along with the standard convolution formula) it holds that
\[
\Lambda_w\rho(y) - \int_{{\R^N}}v(x-y)\rho(x)dx =e^{-\frac{|y|^2}{2}}, \qquad y\in {\R^N}.
\]
It then follows that
\[
\int_{{\R^N}}|w(x-y)|\rho(x)dx \leq \Lambda_w\rho(y), \qquad y\in {\R^N},
\]
as claimed.

\begin{rem}[Large Deviation Principle]
\label{LDP}
The main focus of \cite{RiedlerKuehn} was a large deviation principle for the stochastic neural field equation \eqref{NF SEE} with small noise, but in a less general situation than we consider here.  In particular, the authors only considered the neural field equation driven by a simple additive noise, white in both space and time.

We would therefore like to remark that in our more general case, and under much weaker conditions than those imposed in \cite{RiedlerKuehn}, an LDP result still holds and can be quoted from the literature. Indeed, such a result is presented in \cite[Theorem 7.1]{Peszat}.  The main conditions required for the application of this result have essentially already been checked above (global Lipschitz properties of $\mathbf{F}$ and $\sigma(\cdot)\circ B$), and it thus remains to check conditions (E.1) -- (E.4) as they appear in \cite{Peszat}.  In fact these are trivialities, since the strongly continuous contraction semigroup $S(t)$ is generated by the identity in our case.
\end{rem}

\subsection{Color of the noise in stochastic neural field equation \eqref{NF SEE}}
\label{color}

It is important to understand the properties of the noise term in the neural field equation \eqref{NF SEE} which we now know has a solution in some sense, and in particular why we have have chosen the particular form \eqref{B} for the `coefficient' $B$ (although it is really an operator).  Recall that by definition
\[
B(u)(x) = \int_{{\R^N}} \varphi(x-y)u(y)dy, \qquad x\in{\R^N},\ u\in L^2({\R^N}),
\]
for some $\varphi\in L^1({\R^N})$.  The first point is that we have deliberately made the definition for $\varphi\in L^1({\R^N})$ so that it is possible to (at least formally) take $\varphi$ as the Dirac delta function.  More rigorously we can take smooth approximations in $L^1({\R^N})$ that integrate to $1$.  Anyhow, the operator $B$ is then simply the identity and Theorem \ref{NF SEE:e+u} still holds.

However, when $\varphi$ is not the Dirac function, we claim that the noise term $BdW(t)$ in \eqref{NF SEE} is in fact spatially correlated.  This is important in applications. For example in \cite{bressloff-webber-2012} it is actually a spatially correlated noise that is added to the deterministic neural field equation.

To see the spatial correlation, consider $B$ as a map from $L^2({\R^N}) \to L^2({\R^N}, \rho)$ for some bounded $\rho\in L^1({\R^N})$.  As noted above, $B$ is then bounded, and hence the process $(U(t))_{t\geq0}$ defined by
\[
U(t) = \int_0^tBdW(s), \qquad t\geq0,
\] 
is a well-defined $L^2({\R^N}, \rho)$-valued process.  Moreover, by Theorem 5.2 of \cite{D-Z}, $(U(t))_{t\geq0}$ is Gaussian with mean zero and
\[
\mathrm{Cov} \left(U(t)U(s)\right) = s\wedge t BQB^*, \quad s, t \geq 0,
\]
where $B^*:L^2({\R^N}, \rho)\to L^2({\R^N})$ is the adjoint of $B$. In other words, for all $g, h \in L^2({\R^N}, \rho)$, $s, t\geq0$, we have that
\[
\mE{\langle g, U(s) \rangle_{L^2({\R^N}, \rho)}\langle h, U(t) \rangle_{L^2({\R^N}, \rho)}} = s\wedge t\langle BQB^*g, h\rangle_{L^2({\R^N}, \rho)}.
\]
That is, for any $g, h \in L^2({\R^N}, \rho)$
\begin{align}
\label{covariance 1}
&\int_{\R^N}\int_{\R^N}\mE{U(s, x) U(t, y)}g(x)h(y)\rho(x)\rho(y)dxdy = s\wedge t\left\langle QB^*h, B^*g\right\rangle_{L^2({\R^N})}\nonumber\\
&\qquad\qquad\qquad =  s\wedge t\int_{\R^N} QB^*g(z)B^*h(z)dz\nonumber\\
&.\qquad\qquad\qquad =  s\wedge t\int_{\R^N} Q^{1/2}B^*g(z)Q^{1/2}B^*h(z)dz.
\end{align}
Now, by definition, for $u\in L^2({\R^N})$ and $f\in L^2({\R^N}, \rho)$
\begin{align*}
\int_{\R^N} u(y) B^*(f)(y)dy &= \int_{\R^N}  B(u)(x)f(x)\rho(x)dx \\
& =  \int_{\R^N} u(y)\int_{\R^N} \varphi(x-y)f(x)\rho(x)dxdy
\end{align*}
so that $B^*(f)(y) = \int_{\R^N}\varphi(x-y)f(x)\rho(x)dx$.  Using this in \eqref{covariance 1}, we see that
\begin{align*}
&\int_{\R^N}\int_{\R^N}\mE{U(s, x) U(t, y)}g(x)h(y)\rho(x)\rho(y)dxdy\\
&\qquad =  s\wedge t\int_{\R^N}  Q^\frac{1}{2}\left(\int_{\R^N}\varphi(x-z)g(x)\rho(x)dx\right)  Q^\frac{1}{2}\left(\int_{{\R^N}}\varphi(y-z)h(y)\rho(y)dy\right)dz\\
&\qquad =  s\wedge t\int_{\R^N}  \left(\int_{\R^N} Q^\frac{1}{2}\varphi(x-z)g(x)\rho(x)dx\right)  \left(\int_{{\R^N}}Q^\frac{1}{2}\varphi(y-z)h(y)\rho(y)dy\right)dz,
\end{align*}
for all $g, h \in L^2({\R^N}, \rho)$, since $Q$ is a linear operator and is self-adjoint.
We can then conclude that 
\begin{equation}
\label{bresloff covariance 1}
\mE{U(s, x) U(t, y)} = s\wedge t\int_{\R^N} Q^\frac{1}{2}\varphi(x-z)Q^\frac{1}{2}\varphi(y-z)dz = (s\wedge t)c(x-y),
\end{equation}
where $c(x) = Q^\frac{1}{2}\varphi*Q^\frac{1}{2}\tilde{\varphi}(x)$ and $\tilde{\varphi}(x)=\varphi(-x)$. Hence $(U(t))_{t\geq0}$ is white in time but stationary and colored in space with covariance function $(s\wedge t)c(x)$.  This is exactly the rigorous interpretation of the noise described in \cite{bressloff-webber-2012}, when interpreting a solution to the stochastic neural field equation as a process taking values in $L^2({\R^N}, \rho_w)$.\\

%

\section{Stochastic neural fields as Gaussian random fields}
\label{random fields}

In this section we take an alternative approach, and try to give sense to a solution to the stochastic neural field equation \eqref{bressloff} as a random field, using Walsh's theory of integration.  

This approach generally takes as its starting point a deterministic PDE, and then attempts include a term which is random in both space and time. With this in mind, consider first the well studied deterministic neural field equation
\begin{equation}
\label{NF det}
\partial_tY(t, x) =  - Y(t, x) + \int_{\R^N} w(x,y)G(Y(t, y))dy, \quad x\in\R^N,\ t\geq0.
\end{equation}
Under some conditions on the neural field kernel $w$ (boundedness, condition (\textbf{C2'}) above and $L^1$-Lipschitz continuity), this equation has a unique solution $(t, x)\mapsto Y(t, x)$ that is bounded and continuous in $x$ and continuously differentiable in $t$, whenever $x\mapsto Y(0, x)$ is bounded and continuous (\cite{Potthast}).

The idea then is to directly add a noise term to this equation, and try and give sense to all the necessary objects in order to be able to define what we mean by a solution.  Indeed, consider the following stochastic version of \eqref{NF det},
\begin{equation}
\label{NF RF}
\partial_tY(t, x) =  - Y(t, x) + \int_{\R^N} w(x, y)G(Y(t, y))dy + \sigma(Y(t, x))\dot{W}(t, x)
\end{equation}
where $\dot{W}$ is a ``space-time white noise''.  The definition of $\dot{W}$ will be made precise below, but informally we may think of the object $\dot{W}(t, x)$ as the random \textit{distribution} which, when integrated against a test function $h\in L^2(\R^+\times{\R^N})$
\[
\dot{W}(h):= \int_0^\infty\int_{\R^N} h(t, x)\dot{W}(t, x)dtdx, \qquad h\in L^2(\R^+\times{\R^N}),
\]
yields a zero-mean Gaussian random field  $(\dot{W}(h))_{h\in L^2(\R^+\times{\R^N})}$  with covariance 
\[
\mE{\dot{W}(g)\dot{W}(h)} = \int_0^\infty\int_{\R^N} g(t, x) h(t, x) dxdt, \qquad g, h\in L^2(\R^+\times{\R^N}).
\] 

The point is that with this interpretation of space-time white noise, since equation \eqref{NF RF} specifies no regularity in the spatial direction (the map $x\mapsto Y(t,x)$ is simply assumed to be Lebesgue measurable so that the integral makes sense), it is clear that any solution will be \textit{distribution}-valued in the spatial direction, which is rather unsatisfactory.  Indeed, consider the extremely simple linear case when $G\equiv 0$ and $\sigma \equiv 1$, so that \eqref{NF RF} reads
\begin{equation}
\label{NF linear}
\partial_tY(t, x) =  - Y(t, x) + \dot{W}(t, x).
\end{equation}
Formally, the solution to this equation is given by 
\[
Y(t, x) = e^{-t}Y(0, x) + \int_0^t e^{-(t-s)}\dot{W}(s, x) ds, \qquad t\geq0, x\in{\R^N},
\]
and since the integral is only over time it is clear (at least formally) that $x\mapsto Y(t, x)$ is a distribution for all $t\geq0$.  This differs significantly from the usual SPDE situation, when one would typically have an equation such as \eqref{NF linear} where a second order differential operator in space is applied to the first term on the right-hand side (leading to the much studied \textit{stochastic heat equation}).  In such a case, the semigroup generated by the second order differential operator can be enough to smooth the space-time white noise in the spatial direction, leading to solutions that are continuous in both space and time (at least when the spatial dimension is $1$ -- see for example \cite[Chapter 3]{Pardoux} or \cite[Chapter 3]{Walsh}).

Of course one can develop a theory of distribution-valued processes (as is done in \cite[Chapter 4]{Walsh}) to interpret solutions of \eqref{NF RF} in the obvious way:  one says that the random field $(Y(t, x))_{t\geq0, x\in{\R^N}}$ is a (weak) solution to \eqref{NF RF} if for all $\phi\in C_0^\infty({\R^N})$ it holds that
\begin{align*}
\int_{{\R^N}}\phi(x) Y(t, x)dx &= e^{-t}\int_{\R^N}\phi(x) Y(0, x)dx \\
& \quad + \int_0^t\int_{\R^N} e^{-(t-s)}\phi(x)\int_{\R^N} w(x, y)G(Y(s, y))dydxds\\
& \quad + \int_0^t\int_{\R^N} e^{-(t-s)}\phi(x)\sigma(Y(s,x))\dot{W}(s, x)dxds,
\end{align*}
for all $t\geq 0$.  Here all the integrals can be well-defined, which makes sense intuitively if we think of $\dot{W}(t, x)$ as a distribution.  In fact it is more common to write  $\int_0^t\int_{\R^N} e^{-(t-s)}\phi(x)W(dsdx)$ for the stochastic integral term, once it has been rigorously defined. 


However, we argue that it is not worth developing this theory here, since distribution valued solutions are of little interest physically.
It is for this reason that we instead look for other types of random noise to add to the deterministic equation \eqref{NF det} that will produce solutions that are real-valued random fields, and are at least H\"older continuous in both space and time.  In the theory of SPDEs, when the spatial dimension is $2$ or more, the problem of an equation driven by space-time white noise having no real-valued solution is a well-known and much studied one (again see for example \cite[Chapter 3]{Pardoux} or \cite[Chapter 3]{Walsh} for a discussion of this).  To get around the problem, a common approach (\cite{Dalang-Frangos, Sanz-Sole, Sanz-Sole2}) is to consider random noises that are smoother than white noise, namely a Gaussian noise that is white in time but has a smooth spatial covariance.  Such random noise is known as either spatially colored or spatially homogeneous white-noise.  One can then formulate conditions on the covariance function to ensure that real-valued H\"older continuous solutions to the specific SPDE exist.

It should also be mentioned, as remarked in \cite{Dalang-Frangos}, that in trying to model physical situations, there is some evidence that white-noise smoothed in the spatial direction is more natural, since spatial correlations are typically of a much larger order of magnitude than time correlations. 

In the stochastic neural field case, since we have no second order differential operator, our solution will only ever be as smooth as the noise itself.  We therefore look  to add a noise term to \eqref{NF det} that is at least H\"older continuous in the spatial direction, and then proceed to look for solutions to the resulting equation in the sense of Walsh. 

The section is structured as follows.  First we briefly introduce Walsh's theory of stochastic integration with respect to martingale measures, for which the classical reference is \cite{Walsh} although we instead follow closely the more recent explanation given by D. Khoshnevisan in \cite{DalangKhosh-course}.  This theory will be needed to well-define the stochastic integral in our definition of a solution to the neural field equation.  We then introduce the spatially smoothed space-time white noise that we will consider, before finally applying the theory  to analyze solutions of the neural field equation driven by this spatially smoothed noise under certain conditions.

\subsection{Integration with respect to the white noise process}
\label{Walsh}

Consider the centered Gaussian random field\footnote{Recall that a collection of random variables $X = \{X(\theta)\}_{\theta\in \Theta}$ indexed by a set $\Theta$ is a Gaussian random field on $\Theta$ if $(X(\theta_1), \dots, X(\theta_k))$ is a $k$-dimensional Gaussian random vector for every $\theta_1, \dots, \theta_k\in \Theta$.  It is characterized by its mean and covariance functions.}
 \[
 \dot{W} := (\dot{W}(A))_{A\in\mathcal{B}(\R^+\times {\R^N})}
 \]
 indexed by $A\in \mathcal{B}(\R^+\times{\R^N})$ (where $\R^{+} := [0, \infty)$) with covariance function
\begin{equation}
\label{white noise}
\mE{\dot{W}(A)\dot{W}(B)} =|A\cap B|, \quad A, B, \in  \mathcal{B}(\R^+\times{\R^N}),
\end{equation}
where $|A\cap B|$ denotes the Lebesgue measure of $A\cap B$. We say that $\dot{W}$ is a \textit{white noise} on $\R^+\times{\R^N}$.  We then define the \textit{white noise process} $W:=(W_t(A))_{t\geq0, A\in \mathcal{B}({\R^N})}$ by 
\begin{equation}
\label{white noise process}
W_t(A):= \dot{W}([0, t]\times A),
\end{equation}
for all $t\geq 0$, and we suppose that $(W_t(A))_{t\geq0}$ is adapted to the filtration $(\cF_t)_{t\geq0}$ for all $A\in \mathcal{B}({\R^N})$.

We would like to build up a theory of stochastic integration with respect to this process.  With this in mind, one may hope that $W_t$ is a signed measure on ${\R^N}$ for all $t>0$.  However, for all such $t$ it holds that
\[
\lim_{n\to\infty}\sum_{j=0}^{2^n - 1}\left|W_t\left(\left[\frac{j-1}{2^n}, \frac{j}{2^n}\right]\right)\right| = \infty
\]
almost surely (see Exercise 3.16 \cite[Chapter 1]{DalangKhosh-course}), so that $W_t$ is in fact not $\sigma$-finite with any positive probability.

On the other hand, we can prove that
\[
\mathbb{P}\left(W_t\left(\bigcup_{i=1}^\infty A_i \right) = \sum_{i=1}^\infty W_t(A_i) \right) = 1
\]
for all disjoint sets $A_1, A_2, \dots \in \mathcal{B}({\R^N})$ and all $t\geq0$, and that the sum is convergent in $L^2(\Omega, \cF, \mathbb{P})$ (see Lemma 5.3 \cite[Chapter 1]{DalangKhosh-course}).  In this sense $W_t$ is instead an $L^2(\Omega, \cF, \mathbb{P})$-\textit{valued} (random) measure i.e. $W_t: \mathcal{B}({\R^N}) \to L^2(\Omega, \cF, \mathbb{P})$.

Moreover, it is straightforward to show that for all $A\in \cB({\R^N})$, $(W_t(A))_{t\geq0}$ is a centered martingale (with respect to $(\cF_t)_{t\geq0}$).  In summary we have that the white noise process $W :=(W_t(A))_{t\geq0, A\in \cB({\R^N})}$ is such that 
\begin{itemize}
\item[{\rm(i)}] $W_0(A) = 0$ almost surely, for all $A\in \cB({\R^N})$;
\item[{\rm(ii)}] for all $t>0$, $W_t$ is a $\sigma$-finite $L^2(\Omega, \cF, \bbP)$-valued signed measure; 
\item[{\rm(iii)}] for all $A\in\cB({\R^N})$, $(W_t(A))_{t\geq0}$ is a centered martingale with respect to the filtration $(\cF_t)_{t\geq0}$.
\end{itemize}
In general, a family of random variables indexed by $t\geq0$ and $A\in \cB({\R^N})$ satisfying (i)-(iii) is defined to be a \textit{martingale measure} (with respect to $(\cF_t)_{t\geq0}$).

One can in fact build stochastic integrals with respect to general martingale measures under a condition known as \textit{worthiness} (\cite[Chapter 2]{Walsh}).  However, for our needs and to keep things simple, we concentrate on this construction for the white noise process (which turns out to be worthy).

Indeed, starting with \textit{elementary} functions $f: \R^+\times {\R^N} \times \Omega \to \R$ of the form
\begin{equation}
\label{elementary functions}
f(t, x, \omega) = X(\omega)\Ind_{(a, b]}(t)\Ind_A(x)
\end{equation}
where $a, b \in \R^+$, $X:\Omega\to \R$ is bounded and $\cF_a$-measurable, and $A\in \cB({\R^N})$, we first define the stochastic integral process of $f$ with respect to $W$ as
\[
(f\cdot W)_t(B)(\omega) := X(\omega)\left[W_{t\wedge b}(A\cap B) - W_{t\wedge a}(A\cap B)\right](\omega), \quad t\geq0, B\in \cB({\R^N}).
\]
It is important to note that $f\cdot W$ is itself a new martingale measure (exactly as It\^o integrals with respect to martingales are martingales).  As usual we then build up the definition to accommodate integrands of linear combinations of elementary functions.  
We call such functions \textit{simple} functions, and denote the set of all simple functions by $\mathfrak{S}$.
We will also say that a function $(t,x,\omega)\mapsto f(t,x,\omega)$ is \textit{predictable} if it is measurable with respect to the $\sigma$-algebra generated by $\mathfrak{S}$, which we denote by $\mathcal{P}$.  In other words, $\mathcal{P}$ is the smallest $\sigma$-algebra on $\R^+\times{\R^N}\times\Omega$ such that all simple functions are measurable.

As with the construction of the It\^o integral, to go beyond linear combinations of elementary functions the quadratic variation process plays a role.  Indeed, we define
\[
Q_W([0, t), A, B) := < W_\cdot(A), W_\cdot(B)>_t, \qquad \forall t\geq 0,\ A, B \in \cB({\R^N}),
\]
where $< \cdot, \cdot >_t$ is the standard cross-variation process between two martingales. The point is that this process defines a $\sigma$-finite measure on $\R^+\times {\R^N} \times{\R^N}$, since by \eqref{white noise}
\begin{align*}
Q_W([0, t), A, B)  &= \E[W_t(A)W_t(B)]= \E[\dot{W}([0,t]\times A) \dot{W}([0, t]\times B)]\\
&= |([0,t]\times A)\cap ([0,t]\times B)|\\
&= t |A\cap B|,
\end{align*}
for all $t\geq0$, $A, B \in \cB({\R^N})$, so that
\begin{equation}
\label{Q measure}
Q_W(dtdxdy) = dt\delta_0(x-y)dxdy,
\end{equation}
where $\delta_0$ is the Dirac delta function.

\begin{rem}
It should be noted that when building stochastic integrals with respect to a general martingale measure, one needs to impose extra conditions in order to ensure that the quadratic variation process can be associated with a $\sigma$-finite measure.  
It is at this point that the afore mentioned worthiness property is needed.
\end{rem}

To proceed, we now fix a finite horizon $T>0$ and define the norm
\begin{equation}
\label{MM norm}
\| f\|^2_W :=  \E\left[\int_0^T\int_{{\R^N}}\int_{{\R^N}} |f(t, x)f(t, y)|Q_W(dtdxdy)\right] = \E\left[\int_0^T\int_{{\R^N}} |f(t, x)|^2dtdx\right],
\end{equation}
for any predictable function $f$.  Then let $\mathfrak{P}_W$ be the set of all predictable functions $f$ for which $\|f\|_{W} <\infty$.  

 The following proposition and theorem complete the construction of the stochastic integral with respect to $W$.  For proofs we refer to \cite[Proposition 2.3 and Theorem 2.5]{Walsh} respectively (which are written for the general case of a worthy martingale measure).

\begin{prop}
The space $\mathfrak{P}_W$ equipped with the norm $\|\cdot\|_W$ is a complete Banach space.  Moreover, the space of simple functions $\mathfrak{S}$ is dense in $\mathfrak{P}_W$.
\end{prop}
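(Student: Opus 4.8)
The plan is to establish the two assertions separately, both by adapting the standard arguments for scalar It\^o integration to the martingale-measure setting. Throughout, fix the finite horizon $T>0$ and recall from \eqref{MM norm} that $\|f\|_W^2 = \E\left[\int_0^T\int_{\R^N}|f(t,x)|^2\,dt\,dx\right]$, so that $\mathfrak{P}_W$ is nothing but the set of predictable $f$ that lie in $L^2\big([0,T]\times\R^N\times\Omega,\,\mathcal{P},\,dt\,dx\,d\bbP\big)$.

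\textbf{Completeness.} First I would observe that $\|\cdot\|_W$ is exactly the restriction of the $L^2(dt\,dx\,d\bbP)$-norm to predictable functions, so it is genuinely a norm (modulo the usual identification of functions equal $dt\,dx\,d\bbP$-a.e.), and the triangle inequality and homogeneity are inherited for free. For completeness, take a Cauchy sequence $(f_n)$ in $\mathfrak{P}_W$. Since $L^2\big([0,T]\times\R^N\times\Omega, \mathcal{B}([0,T])\otimes\mathcal{B}(\R^N)\otimes\cF,\, dt\,dx\,d\bbP\big)$ is a complete Banach space, $(f_n)$ converges in that larger space to some limit $f$. The only thing to check is that the limit $f$ can be taken to be predictable, i.e. $\mathcal{P}$-measurable: this follows because $\mathcal{P}$ is a sub-$\sigma$-algebra, so $L^2$ of the sub-$\sigma$-algebra is a closed subspace of the full $L^2$, hence itself complete; a standard fact is that an $L^2$-limit of $\mathcal{P}$-measurable functions has a $\mathcal{P}$-measurable version (pass to an a.e.-convergent subsequence). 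Thus $f\in\mathfrak{P}_W$ and $\|f_n - f\|_W\to 0$.

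\textbf{Density of $\mathfrak{S}$.} The substantive point is that simple functions are dense. I would proceed in the usual three-layer approximation. Let $f\in\mathfrak{P}_W$. Step one: reduce to bounded $f$ by truncating, $f^{(M)} := f\,\Ind_{\{|f|\leq M\}}$, which is still predictable and converges to $f$ in $\|\cdot\|_W$ by dominated convergence. Step two: for bounded predictable $f$, use a monotone-class / functional argument. Let $\mathcal{H}$ be the set of bounded predictable $f$ that are $\|\cdot\|_W$-approximable by simple functions; $\mathcal{H}$ is a vector space, closed under bounded monotone limits (dominated convergence again gives the $\|\cdot\|_W$-convergence, and a limit of approximable functions is approximable since approximability is a closed condition), and it contains the elementary functions $X(\omega)\Ind_{(a,b]}(t)\Ind_A(x)$ of the form \eqref{elementary functions}, hence all finite linear combinations of these, i.e. all of $\mathfrak{S}$ itself. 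Since the elementary functions generate $\mathcal{P}$ and form a $\pi$-system (up to the linear-combination issue), the functional monotone class theorem shows $\mathcal{H}$ contains every bounded $\mathcal{P}$-measurable function. Combining the two steps gives density in all of $\mathfrak{P}_W$.

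\textbf{Main obstacle.} The genuinely delicate step is step two: one must be careful that the class of \emph{elementary} functions, whose indicators generate $\mathcal{P}$ by definition, is rich enough for the monotone class theorem to apply — in particular one needs that indicators of sets of the form $(a,b]\times A\times F$ with $F\in\cF_a$ generate $\mathcal{P}$, and that approximating a bounded predictable function requires controlling the approximation simultaneously in $t$, $x$ and $\omega$ under a single $L^2(dt\,dx\,d\bbP)$ norm rather than pointwise. Since $\|\cdot\|_W$ is exactly this $L^2$ norm, the usual It\^o-integral density proof transfers essentially verbatim; the spatial variable $x\in\R^N$ plays no role beyond enlarging the measure space, because $Q_W(dt\,dx\,dy)=dt\,\delta_0(x-y)\,dx\,dy$ collapses the double spatial integral in \eqref{MM norm} to a single one. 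I would simply cite \cite[Proposition 2.3]{Walsh} for the general worthy-martingale-measure version and note that the white noise process $W$ is worthy with dominating measure $Q_W$, so that all hypotheses there are met.
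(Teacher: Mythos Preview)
Your proposal is correct and in fact supplies more detail than the paper itself, which does not prove this proposition at all but simply refers the reader to \cite[Proposition 2.3]{Walsh} for the general worthy-martingale-measure case. Your sketch (completeness via closedness of $L^2(\mathcal{P})$ inside $L^2$, density via truncation plus a functional monotone class argument on the generators of $\mathcal{P}$) is exactly the standard argument carried out in Walsh, so the two approaches coincide; you have just unpacked the citation.
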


\begin{thm}
\label{thm: int}
For all $f\in \mathfrak{P}_W$, $f\cdot W$ can be well-defined as the $L^2(\Omega, \cF, \mathbb{P})$-limit of martingale measures $f_n\cdot W$, for an approximating sequence $\{f_n\}_{n\geq1}\subset \mathfrak{S}$ in the norm $\|\cdot\|_W$. Moreover $f\cdot W$ is a martingale measure such that for all $t\in (0, T]$ and $A, B\in \cB({\R^N})$,
\[
<(f\cdot W)_\cdot(A), (f\cdot W)_\cdot(B)>_t = \int_0^t\int_A\int_Bf(s,x)f(s, y)Q_W(dtdxdy),
\]
and 
\begin{equation}
\label{L2 burkholder}
\E[(f\cdot W)^2_t(A)] \leq \|f\|^2_W.
\end{equation}
\end{thm}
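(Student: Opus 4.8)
The plan is to follow the standard It\^o-type recipe: establish an isometry-type identity on the dense class $\mathfrak{S}$ of simple functions, and then extend everything by continuity.

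The first step is a direct computation for $f\in\mathfrak{S}$. After refining the underlying partition, such an $f$ can be written as a finite sum $\sum_i X_i\ind_{(a_i,b_i]}(t)\ind_{A_i}(x)$ with the time--space supports $(a_i,b_i]\times A_i$ pairwise disjoint and each $X_i$ bounded and $\cF_{a_i}$-measurable. Then $(f\cdot W)_t(B)=\sum_i X_i[W_{t\wedge b_i}(A_i\cap B)-W_{t\wedge a_i}(A_i\cap B)]$, and using the adaptedness of the $X_i$ together with the covariance structure $\E[W_s(A)W_t(B)]=(s\wedge t)|A\cap B|$ of the white noise process, the cross terms vanish after conditioning and one obtains $\E[(f\cdot W)^2_t(B)]=\E[\int_0^t\int_B|f(s,x)|^2\,ds\,dx]\le\|f\|^2_W$. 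Keeping two sets $A,B$ in the same computation shows that $(f\cdot W)_t(A)(f\cdot W)_t(B)-\int_0^t\int_{A\cap B}f(s,x)^2\,ds\,dx$ is an $(\cF_t)$-martingale, which is precisely the claimed identification of the cross-variation $\langle(f\cdot W)_\cdot(A),(f\cdot W)_\cdot(B)\rangle_t$ once one recalls the form \eqref{Q measure} of $Q_W$; recall also that $f\cdot W$ is itself a martingale measure for such $f$.

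Next, for general $f\in\mathfrak{P}_W$, the preceding proposition supplies simple functions $f_n$ with $\|f_n-f\|_W\to0$. Applying the inequality of the first step to $f_n-f_m\in\mathfrak{S}$, for each fixed $t\in(0,T]$ and $A\in\cB({\R^N})$ the variables $(f_n\cdot W)_t(A)$ form a Cauchy sequence in $L^2(\Omega,\cF,\bbP)$; define $(f\cdot W)_t(A)$ to be the limit, which is easily seen to be independent of the approximating sequence (interlace two sequences). Letting $n\to\infty$ in $\E[(f_n\cdot W)^2_t(A)]\le\|f_n\|^2_W$ gives \eqref{L2 burkholder}. That $f\cdot W$ is again a martingale measure is checked directly: $W_0(A)=0$ is immediate; the martingale property in $t$ transfers to the $L^2$-limit since conditional expectation is an $L^2$-contraction, and a $t$-continuous modification is produced by applying Doob's maximal inequality to $(f_n\cdot W)_\cdot(A)-(f_m\cdot W)_\cdot(A)$ to upgrade to convergence uniform in $t$ along a subsequence; the $\sigma$-finite $L^2$-valued measure property in $A$ follows from the same isometry applied to $f\,\ind_{\R^+\times B}$, which controls the $B$-dependence uniformly enough to pass countable additivity in $L^2$ to the limit. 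Finally, since $(f_n\cdot W)_t(A)(f_n\cdot W)_t(B)\to(f\cdot W)_t(A)(f\cdot W)_t(B)$ in $L^1(\Omega)$ and $\int_0^t\int_{A\cap B}f_n^2\,ds\,dx\to\int_0^t\int_{A\cap B}f^2\,ds\,dx$ in $L^1(\Omega)$ (both consequences of $\|f_n-f\|_W\to0$), the martingale identity from the first step survives the limit, identifying the bracket of $f\cdot W$.

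The main obstacle I anticipate is not the algebra of the isometry on simple functions, which is routine, but rather the bookkeeping in the last paragraph: controlling the genuinely two-parameter nature of $f\cdot W$, namely obtaining convergence that is simultaneously good enough in $t$ (to extract a continuous modification) and in the set variable $A$ (to recover $\sigma$-finite countable additivity of $B\mapsto(f\cdot W)_t(B)$), rather than merely for each fixed pair $(t,A)$.
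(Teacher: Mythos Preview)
Your sketch is correct and is precisely the standard isometry--density--extension argument. Note, however, that the paper does not supply its own proof of this statement: it simply refers the reader to \cite[Proposition 2.3 and Theorem 2.5]{Walsh}, where the construction is carried out for general worthy martingale measures. What you have written is essentially Walsh's argument specialized to the white-noise case, so there is nothing to compare beyond observing that your proposal fills in exactly what the paper leaves to the citation; the bookkeeping concern you flag (simultaneous control in $t$ and in the set variable) is real but is handled just as you indicate, via Doob's maximal inequality for the $t$-uniformity and the isometry applied to $f\ind_{[0,T]\times B}$ together with dominated convergence for countable additivity in $B$.
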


The $L^p$-version of \eqref{L2 burkholder} is known as Burkh\"older's inequality and will be useful:
\begin{thm}[Burkh\"older's inequality]
\label{thm: burkholder}
For all $p\geq2$ there exists a constant $c_p$ such that for all $f\in\mathfrak{P}_W$, $t\in (0, T]$ and $A\in\cB({\R^N})$,
\[
\E[|(f\cdot W)_t(A)|^p] \leq c_p\E\left[\left(\int_0^T\int_{{\R^N}}|f(t, x)|^2dtdx\right)^\frac{p}{2}\right].
\]
\end{thm}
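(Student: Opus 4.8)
The plan is to fix $A\in\cB({\R^N})$, view the process $M^{A}:=\big((f\cdot W)_t(A)\big)_{t\in[0,T]}$ as a continuous square-integrable martingale, and then simply invoke the classical Burkh\"older--Davis--Gundy (BDG) inequality for continuous martingales. By Theorem \ref{thm: int} we already know that $M^A$ is a martingale (with respect to $(\cF_t)_{t\geq0}$) with quadratic variation
\[
\langle M^A\rangle_t \;=\; \int_0^t\!\!\int_A |f(s,x)|^2\,dx\,ds , \qquad t\in[0,T],
\]
and by \eqref{L2 burkholder} it is square-integrable. The one point that needs a short argument is that $M^A$ has a \emph{continuous} modification. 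For $f\in\mathfrak{S}$ this is clear, since $(f\cdot W)_t(A)$ is then a finite linear combination of terms of the form $X\big(W_{t\wedge b}(A'\cap A)-W_{t\wedge a}(A'\cap A)\big)$, each continuous in $t$ because $t\mapsto W_t(B)$ is, up to the deterministic factor $\sqrt{|B|}$, a standard Brownian motion. For general $f\in\mathfrak{P}_W$ one picks $f_n\to f$ in $\|\cdot\|_W$ with $f_n\in\mathfrak{S}$; since $(f_n-f_m)\cdot W$ is again a martingale measure, Doob's maximal inequality together with \eqref{L2 burkholder} gives $\E\big[\sup_{t\in[0,T]}|(f_n\cdot W)_t(A)-(f_m\cdot W)_t(A)|^2\big]\leq 4\|f_n-f_m\|_W^2\to0$, so a subsequence of $\big((f_n\cdot W)_\cdot(A)\big)_n$ converges uniformly in $t$, almost surely, to a continuous process which must coincide with $M^A$ (the $L^2$-limit agreeing with the a.s.\ limit for each fixed $t$).

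With $M^A$ continuous, the BDG inequality yields, for every $p\geq2$, a universal constant $c_p$ with
\[
\E\Big[\sup_{t\in[0,T]}|M^A_t|^p\Big] \;\leq\; c_p\,\E\big[\langle M^A\rangle_T^{p/2}\big] \;=\; c_p\,\E\bigg[\Big(\int_0^T\!\!\int_A |f(s,x)|^2\,dx\,ds\Big)^{p/2}\bigg].
\]
The claimed estimate then follows from two elementary pointwise (in $\omega$) bounds: $|(f\cdot W)_t(A)|^p\leq\sup_{s\in[0,T]}|M^A_s|^p$ for every $t\in(0,T]$, and $\int_0^T\!\int_A|f(s,x)|^2\,dx\,ds\leq\int_0^T\!\int_{{\R^N}}|f(s,x)|^2\,dx\,ds$ since $A\subseteq{\R^N}$ and $|f|^2\geq0$ (and $x\mapsto x^{p/2}$ is nondecreasing on $[0,\infty)$); monotonicity of the expectation gives the result with the same constant $c_p$.

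I expect the only genuinely delicate step to be the passage, for general $f\in\mathfrak{P}_W$, from the $L^2(\Omega)$-convergence supplied by Theorem \ref{thm: int} to a statement that controls the paths $t\mapsto(f\cdot W)_t(A)$ uniformly in $t$ --- i.e.\ establishing the continuous modification described above; once that is in place, classical BDG does all the work and the rest is trivial monotonicity. An alternative that sidesteps path-regularity issues is to first prove the inequality for $f\in\mathfrak{S}$ (where $M^A$ is manifestly a continuous martingale and BDG applies immediately) and then approximate, using Fatou's lemma on the left after extracting an a.s.-convergent subsequence of $(f_n\cdot W)_t(A)$; the small subtlety there is that the right-hand side is not simply $\|f_n\|_W^p$ when $p>2$, so one has to argue convergence of $\big(\int_0^T\!\int_{{\R^N}}|f_n|^2\,dx\,ds\big)^{p/2}$ along the subsequence (e.g.\ almost surely, after passing to a further subsequence) rather than merely invoking $\|f_n\|_W\to\|f\|_W$.
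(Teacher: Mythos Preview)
The paper does not actually prove this statement: Theorem~\ref{thm: burkholder} is simply stated (as ``the $L^p$-version of \eqref{L2 burkholder}'') and then used, with no accompanying argument. So there is no paper proof to compare against.

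Your proof is correct and is the standard derivation. Viewing $M^A_t=(f\cdot W)_t(A)$ as a continuous square-integrable martingale with quadratic variation $\langle M^A\rangle_t=\int_0^t\int_A|f(s,x)|^2\,dx\,ds$ (which is exactly what Theorem~\ref{thm: int} together with \eqref{Q measure} gives) and applying BDG is precisely how Burkh\"older's inequality for martingale-measure integrals is obtained in the Walsh framework; see e.g.\ \cite[Theorem~2.5 and the surrounding discussion]{Walsh} or \cite{DalangKhosh-course}. Your handling of the continuity of $t\mapsto M^A_t$ --- explicit for simple $f$, then via Doob's maximal inequality and uniform-in-$t$ $L^2$ convergence for general $f\in\mathfrak{P}_W$ --- is the right way to justify applying BDG in its continuous-martingale form. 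The final monotonicity steps (replacing $\sup_t$ by a fixed $t$ on the left, and $A$ by $\R^N$ on the right) are of course immediate.
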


We now adopt the more standard notation and set 
\begin{equation}\label{notation}
(f\cdot W)_t(A) =: \int_0^t\int_AfdW =   \int_0^t\int_Af(s, x)W(dsdx),
\end{equation}
for all $f\in\mathfrak{P}_W$, $t\geq0$ and $A\in\cB({\R^N})$.

\subsection{Spatially smoothed space-time white noise}
\label{smoothed noise}

Let $W = (W_t(A))_{t\geq0, A\in\cB({\R^N})}$ be a white-noise process adapted to $(\cF_t)_{t\geq0}$ as defined in the previous section.  For $\varphi \in L^2({\R^N})$, we can well-define the (Gaussian) random field $(W^\varphi(t, x))_{t\geq0, x\in {\R^N}}$ for any $T>0$ by 
\begin{equation}
\label{smoothed GN}
W^\varphi(t, x) := \int_0^t \int_{\R^N} \varphi(x-y)W(dsdy).
\end{equation}
To see this one just needs to check that $\varphi(x - \cdot)\in \mathfrak{P}_W$ for every $x$, where, as above, $ \mathfrak{P}_W$ is the set of all predictable functions $f$ for which $\|f\|_{W} <\infty$.  The function $\varphi(x - \cdot)$ is clearly predictable for each $x$ (since it is non-random) and for every $T>0$
\begin{align*}
\|\varphi(x - \cdot)\|_W^2 &= \E\left[\int_0^T\int_{{\R^N}}|\varphi(x-z)|^2dtdz \right]\\
&= T\|\varphi\|_{L^2({\R^N})}^2 < \infty,
\end{align*}
so that the integral in \eqref{smoothed GN} is indeed well-defined in the sense of the above construction.  Moreover, $(W^\varphi(t, x))_{t\geq0}$ is a centered martingale for each $x\in{\R^N}$ with respect to $(\cF_t)_{t\geq0}$ (by the properties of martingale measures) and has spatial covariance
\begin{align*}
\E[W^\varphi(t, x)W^\varphi(t, y)] &= \E\left[\int_0^t \int_{\R^N} \varphi(x-u)\,W(dsdu) \times \int_0^t \int_{\R^N} \varphi(y-v)\,W(dsdv)\right]\\
&= \langle  (\varphi(x-\cdot) \cdot W)_\cdot ({\R^N}),\,  (\varphi(y-\cdot) \cdot W)_\cdot ({\R^N})\rangle_t,
\end{align*}
which by equation \eqref{notation}, Theorem \ref{thm: int} and equation \eqref{Q measure} is equal to 
\[
 t\int_{\R^N}\varphi(x-z)\varphi(y-z)dz = t\varphi \star \tilde{\varphi} (x-y), \qquad \forall t\geq0, x,y\in {\R^N},
\]
where $\star$ denotes the convolution operator as usual, and $\tilde{\varphi}(x)=\varphi(-x)$.  In this sense the noise is again spatially correlated. 

The regularity in time of this process is the same as that of a Brownian path:
\begin{lem}
For any $x\in{\R^N}$, the path $t\mapsto W^\varphi(t, x)$ has an $\eta$-H\"older continuous modification for any $\eta\in(0, 1/2)$.
\end{lem}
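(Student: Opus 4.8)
The plan is to use the Kolmogorov continuity theorem, exploiting the Gaussianity of the process $t\mapsto W^\varphi(t,x)$ for fixed $x$. First I would fix $x\in{\R^N}$ and compute the increment $W^\varphi(t,x) - W^\varphi(s,x)$ for $0\leq s\leq t\leq T$. Since the integral in \eqref{smoothed GN} is over $[0,t]\times{\R^N}$ against the martingale measure, and $\varphi(x-\cdot)$ is non-random, this increment equals $(\varphi(x-\cdot)\cdot W)_t({\R^N}) - (\varphi(x-\cdot)\cdot W)_s({\R^N}) = \int_s^t\int_{\R^N}\varphi(x-z)W(dzdu)$, which is a mean-zero Gaussian random variable whose variance, by Theorem \ref{thm: int} and \eqref{Q measure}, is exactly $(t-s)\int_{\R^N}|\varphi(x-z)|^2dz = (t-s)\|\varphi\|_{L^2({\R^N})}^2$.

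Next I would use Gaussianity to pass to higher moments: for any $p\geq 2$, since $W^\varphi(t,x) - W^\varphi(s,x)$ is centered Gaussian with variance $(t-s)\|\varphi\|_{L^2}^2$, there is a universal constant $C_p$ (the $p$-th absolute moment of a standard normal) such that
\[
\E\left[|W^\varphi(t,x) - W^\varphi(s,x)|^p\right] = C_p\,\|\varphi\|_{L^2({\R^N})}^p\,|t-s|^{p/2}.
\]
Alternatively, Burkh\"older's inequality (Theorem \ref{thm: burkholder}) applied to $f = \varphi(x-\cdot)\Ind_{(s,t]}$ gives the same bound up to a constant, so one need not even invoke Gaussianity explicitly for this step. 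Then I would invoke the Kolmogorov-Chentsov continuity criterion: with the bound $\E[|W^\varphi(t,x)-W^\varphi(s,x)|^p]\leq c|t-s|^{1+(p/2-1)}$, the process $t\mapsto W^\varphi(t,x)$ has a modification that is $\eta$-H\"older continuous on $[0,T]$ for any $\eta < (p/2-1)/p = 1/2 - 1/p$. Letting $p\to\infty$ yields H\"older continuity for any exponent $\eta\in(0,1/2)$, and since $T>0$ was arbitrary this gives the claim on $[0,\infty)$ (by patching countably many intervals).

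I do not expect any genuine obstacle here — this is essentially the standard argument that Brownian motion has H\"older-continuous paths, transported through the martingale-measure stochastic integral. The only points requiring mild care are: (a) verifying that the increment genuinely has the stated variance, which follows cleanly from the cross-variation formula in Theorem \ref{thm: int} together with \eqref{Q measure} applied on the time interval $(s,t]$; and (b) noting that the exponent in the Kolmogorov criterion approaches $1/2$ but never reaches it, which is why the statement is for $\eta\in(0,1/2)$ open. If one wanted to be economical, the entire proof reduces to one line invoking Burkh\"older's inequality (Theorem \ref{thm: burkholder}) to get the moment bound and then citing Kolmogorov's theorem.
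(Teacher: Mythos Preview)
Your proposal is correct and matches the paper's proof essentially line for line: the paper simply applies Burkh\"older's inequality (Theorem~\ref{thm: burkholder}) to obtain $\mE{|W^\varphi(t,x)-W^\varphi(s,x)|^p}\leq c_p\|\varphi\|_{L^2(\R^N)}^p(t-s)^{p/2}$ and then invokes Kolmogorov's continuity theorem. Your additional remarks about the Gaussian route and the exponent approaching $1/2$ are accurate but not needed, as you yourself note.
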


\begin{proof}
For $x\in {\R^N}$, $s,t\geq 0$ with $s\leq t$ and any $p\geq 2$ we have by Burkh\"older's inequality (Theorem \ref{thm: burkholder} above) that
\begin{align*}
\mE{\left|W^\varphi(t, x) - W^\varphi(s, x)\right|^p} \leq c_p\|\varphi\|_{L^2({\R^N})}^2(t-s)^\frac{p}{2}.
\end{align*}
The result follows from the standard Kolmogorov continuity theorem (see for example Theorem 4.3 of \cite[Chapter 1]{DalangKhosh-course}).
\end{proof}

More importantly, if we impose some (very weak) regularity on $\varphi$ then $W^\varphi$ inherits some spatial regularity:

\begin{lem}
\label{lem:phi regularity}
Suppose that there exists a constant $C_\varphi$ such that
\begin{equation}
\label{phi regularity}
\|\varphi - {\boldsymbol \tau}_{z}(\varphi)\|_{L^2({\R^N})} \leq C_\varphi |z|^\alpha, \qquad \forall z\in{\R^N},
\end{equation}
for some $\alpha\in(0, 1]$, where ${\boldsymbol \tau}_{z}$ indicates the shift by $z$ operator (so that ${\boldsymbol \tau}_z(\varphi)(y):=\varphi(y+z)$ for all $y, z\in{\R^N}$).  Then for all $t\geq0$, the map $x\mapsto W^\varphi(t, x)$ has an $\eta$-H\"older continuous modification, for any $\eta\in (0, \alpha)$.
\end{lem}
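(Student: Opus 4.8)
The plan is to follow exactly the strategy of the preceding lemma on temporal regularity, but now to bound \emph{spatial} increments of $W^\varphi(t,\cdot)$ and then invoke the Kolmogorov continuity theorem in the $N$-dimensional variable $x$. First I would fix $t\geq0$ and $x,y\in{\R^N}$ and note that, by linearity of the stochastic integral of Theorem \ref{thm: int},
\[
W^\varphi(t,x)-W^\varphi(t,y)=\int_0^t\int_{{\R^N}}\bigl(\varphi(x-z)-\varphi(y-z)\bigr)\,W(ds\,dz)=\bigl(g_{x,y}\cdot W\bigr)_t({\R^N}),
\]
where $g_{x,y}(z):=\varphi(x-z)-\varphi(y-z)$ is non-random, hence predictable, and lies in $\mathfrak{P}_W$: indeed $\|g_{x,y}\|_W^2=T\|g_{x,y}\|_{L^2({\R^N})}^2$, and the change of variables $z\mapsto y-z$ gives $\|g_{x,y}\|_{L^2({\R^N})}=\|{\boldsymbol\tau}_{x-y}(\varphi)-\varphi\|_{L^2({\R^N})}$, which is finite (indeed $\leq C_\varphi|x-y|^\alpha$) by hypothesis \eqref{phi regularity}.

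Next, for $p\geq2$ I would apply Burkh\"older's inequality (Theorem \ref{thm: burkholder}) with $A={\R^N}$ to get
\[
\mE{\bigl|W^\varphi(t,x)-W^\varphi(t,y)\bigr|^p}\leq c_p\Bigl(\int_0^T\|g_{x,y}\|_{L^2({\R^N})}^2\,ds\Bigr)^{p/2}=c_p\,T^{p/2}\,\|{\boldsymbol\tau}_{x-y}(\varphi)-\varphi\|_{L^2({\R^N})}^p\leq c_p\,T^{p/2}\,C_\varphi^p\,|x-y|^{\alpha p}.
\]
Given $\eta\in(0,\alpha)$, I would then choose $p$ large enough that $p(\alpha-\eta)>N$, so that the right-hand side is bounded by $K\,|x-y|^{N+(\alpha p-N)}$ with exponent strictly larger than $N$; the Kolmogorov continuity theorem (e.g.\ Theorem 4.3 of \cite[Chapter 1]{DalangKhosh-course}), applied on each cube $[-n,n]^N$ and patched together across $n$ (legitimate since two continuous modifications agree almost surely wherever both are defined), then produces a modification of $x\mapsto W^\varphi(t,x)$ that is $\eta$-H\"older on ${\R^N}$. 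Finally, since the modifications obtained for a sequence $p_k\to\infty$ are all mutually indistinguishable, one fixed modification is simultaneously $\eta$-H\"older for every $\eta\in(0,\alpha)$.

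I do not expect a genuine obstacle here; the only points needing a little care are the change of variables that rewrites $\|g_{x,y}\|_{L^2}$ as the shift-increment $\|{\boldsymbol\tau}_{x-y}(\varphi)-\varphi\|_{L^2}$ --- so that hypothesis \eqref{phi regularity} applies directly --- and the fact that, in contrast with the temporal lemma (where $N=1$ lets any $p>2$ work), Kolmogorov's theorem in spatial dimension $N$ requires the moment exponent $\alpha p$ to exceed $N$, forcing $p$ to be taken large and thus $\eta$ to stay strictly below $\alpha$. Extending from cubes to all of ${\R^N}$ and checking $p$-independence of the modification are routine.
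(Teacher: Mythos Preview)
Your proposal is correct and follows essentially the same route as the paper: write the spatial increment as a Walsh integral, apply Burkh\"older's inequality, use a change of variables to reduce the $L^2$ norm to $\|\varphi-{\boldsymbol\tau}_{x-y}(\varphi)\|_{L^2}$, invoke hypothesis \eqref{phi regularity}, and conclude by Kolmogorov. Your treatment is in fact slightly more careful than the paper's, which simply writes ``the result follows by Kolmogorov's continuity theorem'' without spelling out the need for $\alpha p>N$ or the localization to cubes.
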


\begin{proof}
For $x, \tilde{x} \in {\R^N}$, $t\geq 0$, and any $p\geq 2$ we have (again by Burkh\"older's inequality) that
\begin{align*}
\mE{\left| W^\varphi(t, x) - W^\varphi(t, \tilde{x})\right|^p} 
&\leq  t^\frac{p}{2}c_p\left( \int_{\R^N} |\varphi(x-y) - \varphi(\tilde{x}-y)|^2dy\right)^\frac{p}{2}\\
&=  t^\frac{p}{2}c_p\left( \int_{\R^N} |\varphi(y) - \varphi(y+\tilde{x} - x)|^2dy\right)^\frac{p}{2}\\
&\leq t^\frac{p}{2}c_pC_\varphi^p|x -\tilde{x}|^{p\alpha}.
\end{align*}
The result follows by Kolmogorov's continuity theorem.
\end{proof}

\begin{rem}
The condition \eqref{phi regularity} with $\alpha =1$ is true if and only if the function $\varphi$ is in the Sobolev space $W^{1, 2}({\R^N})$ (\cite[Proposition 9.3]{Brezis}). 

When $\alpha <1$ the set of functions $\varphi\in L^2({\R^N})$ which satisfy \eqref{phi regularity} defines a Banach space denoted by $N^{\alpha, 2}({\R^N})$ which is known as the Nikolskii space.  This space is closely related to the more familiar fractional Sobolev space $W^{\alpha, 2}({\R^N})$ though they are not identical.  We refer to \cite{Jaques} for a detailed study of such spaces and their relationships.  An example of when \eqref{phi regularity} holds with $\alpha =1/2$ is found by taking $\varphi$ to be an indicator function. It is in this way we see that \eqref{phi regularity} is a rather weak condition.
\end{rem}


\subsection{The stochastic neural field equation driven by spatially smoothed space-time white noise}
\label{NF smoothed GN}
We now have everything in place to define and study the solution to the stochastic neural field equation driven by a spatially smoothed space-time white noise.  Indeed, consider the equation
\begin{equation}
\label{NF GN}
\partial_tY(t, x) =  - Y(t, x) + \int_{\R^N} w(x,y)G(Y(t, y))dy + \sigma(Y(t, x))\frac{\partial}{\partial t}W^\varphi(t, x), 
\end{equation}
with initial condition $Y(0, x) = Y_0(x)$ for $x\in{\R^N}$ and $t\geq0$, where 
\begin{itemize}
\item $(W^\varphi(t, x))_{t\geq0, x\in {\R^N}}$ is the spatially smoothed space-time white noise (adapted to $(\cF_t)_{t\geq0}$) defined by \eqref{smoothed GN} for some $\varphi\in L^2({\R^N})$;
\item $G:\R \to \R$ is the nonlinear gain function, assumed to bounded and globally Lipschitz i.e such that there exists a constant $C_G$ with $\sup_{a\in\R}|G(a)| \leq C_G$  and 
\[
|G(a) - G(b)| \leq C_G|a- b|, \qquad \forall a, b \in \R,
\]
as above;
\item $\sigma:\R \to \R$ is globally Lipschitz i.e. there exists a constant $C_\sigma$ such that
\[
|\sigma(a) - \sigma(b)| \leq C_\sigma|a-b|, \ \mathrm{and} \ \ |\sigma(a)| \leq C_\sigma(1 + |a|), \qquad \forall a, b\in\R.
\]
\end{itemize}

Although the above equation is not well-defined ($\frac{\partial}{\partial t}W^\varphi(t, x)$ does not exist), we will interpret a solution to \eqref{NF GN} in the following way.

\begin{defin}
\label{def:mild solution GN}
By a solution to \eqref{NF GN} we will mean a real-valued predictable (i.e. $\mathcal{P}$-measurable) adapted random field $(Y(t, x))_{t\geq0, x\in {\R^N}}$ such that
\begin{align}
\label{mild solution GN}
Y(t, x) &=  e^{-t}Y_0(x) + \int_0^te^{-(t-s)}\int_{\R^N} w(x, y)G(Y(s, y))dyds \nonumber\\
&\qquad\qquad \qquad + \int_0^t\int_{{\R^N}}e^{-(t-s)}\sigma(Y(s, x))\varphi(x-y)W(dsdy),
\end{align}
almost surely for all $t\geq0$ and $x\in {\R^N}$, where the stochastic integral term is understood in the sense described in Section \ref{Walsh}.
\end{defin}

Once again we are interested in the conditions on the neural field kernel $w$ that allow us to prove the existence of solutions in this new sense.  Recall that in Section \ref{EE} we either required conditions (\textbf{C1}) and (\textbf{C2}) or  (\textbf{C1'}) and (\textbf{C2'}) to be satisfied.  The difficulty was to keep everything well-behaved in the Hilbert space $L^2({\R^N})$ (or $L^2({\R^N}, \rho)$).  However, when looking for solutions in the sense of random fields $(Y(t, x))_{t\geq0, x\in {\R^N}}$ such that \eqref{mild solution GN} is satisfied, such restrictions are no longer needed, principally because we no longer have to concern ourselves with the behavior in space at infinity.  Indeed, in this section we simply work with the condition  (\textbf{C2'}) i.e. that 
\[
\forall x\in{\R^N}\ (y\mapsto w(x, y))\in L^1({\R^N}), \quad \mathrm{and}\quad \sup_{x\in{\R^N}}\|w(x,\cdot)\|_{L^1({\R^N})} \leq C_w,
\]
for some constant $C_w$.

\begin{thm}
\label{thm: e+u GN}
Suppose that the map $x\mapsto Y_0(x)$ is Borel measurable almost surely, $Y_0(x)$ is $\cF_0$-measurable for all $x\in\R^N$, 
and that
\[
\sup_{x\in{\R^N}}\mE{|Y_0(x)|^2} < \infty.
\]
Suppose moreover that the neural field kernel $w$ satisfies condition {\rm ({\bf C2'})}.  Then there exists an almost surely unique
predictable random field $(Y(t, x))_{t\geq0, x\in {\R^N}}$ which is a solution to \eqref{NF GN} in the sense of Definition \ref{def:mild solution GN} such that
\begin{equation}
\label{thm: e+u GN L2 bound}
\sup_{t\in[0, T], x\in{\R^N}}\mE{|Y(t, x)|^2} < \infty,
\end{equation}
for any  $T>0$.
\end{thm}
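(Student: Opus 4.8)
The plan is to construct the solution by Picard iteration in the space of predictable real-valued random fields normed by $\|Y\|_T := \sup_{t\in[0,T],\,x\in\R^N}(\mE{|Y(t,x)|^2})^{1/2}$, and to read off uniqueness from the same estimates via Gr\"onwall's lemma. Fix $T>0$, set $Y^{(0)}(t,x) := e^{-t}Y_0(x)$, and define recursively
\begin{align*}
Y^{(n+1)}(t,x) &:= e^{-t}Y_0(x) + \int_0^t e^{-(t-s)}\int_{\R^N} w(x,y)G(Y^{(n)}(s,y))\,dy\,ds\\
&\qquad\qquad + \int_0^t\int_{\R^N}e^{-(t-s)}\sigma(Y^{(n)}(s,x))\varphi(x-y)\,W(ds\,dy).
\end{align*}
The first step is to verify, by induction on $n$, that each $Y^{(n)}$ is a well-defined predictable random field with $\|Y^{(n)}\|_T<\infty$. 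For the stochastic integral at stage $n+1$ one checks, for each fixed $x$, that the integrand $(s,y,\omega)\mapsto e^{-(t-s)}\sigma(Y^{(n)}(s,x,\omega))\varphi(x-y)$ lies in $\mathfrak{P}_W$: it is predictable, and by the linear growth of $\sigma$ together with the induction hypothesis,
\[
\mE{\int_0^T\!\!\int_{\R^N}|\sigma(Y^{(n)}(s,x))|^2\varphi(x-y)^2\,dy\,ds}\leq 2C_\sigma^2\|\varphi\|_{L^2(\R^N)}^2\,T\,(1+\|Y^{(n)}\|_T^2)<\infty,
\]
so that Theorem \ref{thm: int} applies; the drift term is controlled using only $\sup_a|G(a)|\leq C_G$ and (\textbf{C2'}), and the initial term by $\sup_x\mE{|Y_0(x)|^2}<\infty$. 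That predictability and joint $(t,x,\omega)$-measurability propagate along the recursion is routine but should be carried out with the care appropriate to the construction of Section \ref{Walsh}.

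The core of the argument is the contraction estimate for $D_n(t):=\sup_{x\in\R^N}\mE{|Y^{(n+1)}(t,x)-Y^{(n)}(t,x)|^2}$. For the difference of drift terms I would apply, for each fixed $x$, the Cauchy--Schwarz inequality twice --- once in $y$ in the form $(\int_{\R^N}|w(x,y)|\,|f(y)|\,dy)^2\leq\|w(x,\cdot)\|_{L^1(\R^N)}\int_{\R^N}|w(x,y)|\,|f(y)|^2\,dy$ and once in time over $[0,t]$ --- and then the Lipschitz bound on $G$, Fubini, and (\textbf{C2'}), to bound the expected square by $C_G^2C_w^2\,T\int_0^t D_{n-1}(s)\,ds$. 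For the difference of stochastic terms, the $L^2$-isometry furnished by Theorem \ref{thm: int} (via \eqref{Q measure}) and the Lipschitz property of $\sigma$ --- which here acts pointwise in the spatial variable, so no spatial convolution of the $\sigma$-factor is involved --- bound the expected square by $C_\sigma^2\|\varphi\|_{L^2(\R^N)}^2\int_0^t D_{n-1}(s)\,ds$. Using $(a+b)^2\leq 2a^2+2b^2$ (the $Y_0$-terms cancelling) gives
\[
D_n(t)\leq C_*\int_0^t D_{n-1}(s)\,ds,\qquad C_*:=2\big(TC_G^2C_w^2+C_\sigma^2\|\varphi\|_{L^2(\R^N)}^2\big).
\]
The same computation, with boundedness of $G$ and linear growth of $\sigma$ in place of the Lipschitz bounds, shows $\sup_{t\leq T}D_0(t)<\infty$, whence iterating yields $\sup_{t\leq T}D_n(t)\leq C_*^n T^n/n!\cdot\sup_{t\leq T}D_0(t)$.

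Consequently $\sum_{n\geq0}(\sup_{t\leq T}D_n(t))^{1/2}<\infty$, so $(Y^{(n)})_n$ is Cauchy in $\|\cdot\|_T$ and converges to a random field $Y$ which satisfies \eqref{thm: e+u GN L2 bound}; being a uniform-in-$(t,x)$ limit in $L^2(\Omega)$ of predictable fields, it admits a predictable version. Passing to the limit in the recursion --- using the uniform $L^2(\Omega)$-convergence, the Lipschitz continuity of $G$ and $\sigma$, Fubini and dominated convergence for the drift integral, and the $L^2$-isometry for the stochastic integral --- shows that $Y$ satisfies \eqref{mild solution GN}, i.e.\ is a solution in the sense of Definition \ref{def:mild solution GN}. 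For uniqueness, if $Y$ and $Y'$ are two solutions both satisfying \eqref{thm: e+u GN L2 bound}, then $d(t):=\sup_{x\in\R^N}\mE{|Y(t,x)-Y'(t,x)|^2}$ is finite on $[0,T]$ and, by exactly the estimates above, obeys $d(t)\leq C_*\int_0^t d(s)\,ds$; Gr\"onwall's lemma forces $d\equiv0$ on $[0,T]$, and since $T>0$ was arbitrary, $Y(t,x)=Y'(t,x)$ almost surely for every $t\geq0$, $x\in\R^N$.

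I expect the main obstacle to be organizational rather than conceptual: every estimate must be made uniform in $x\in\R^N$ --- which is precisely why (\textbf{C2'}), with its uniform bound $C_w$ on $\|w(x,\cdot)\|_{L^1(\R^N)}$, is the correct hypothesis and why the unboundedness of the spatial domain (which forced the weighted spaces in Section \ref{EE}) causes no trouble in the Walsh framework --- and, somewhat more delicately, one must confirm that predictability and joint $(t,x,\omega)$-measurability genuinely survive each step of the iteration and the passage to the limit, which is the one point where the measurability apparatus underlying the construction of Section \ref{Walsh} is actually needed rather than soft arguments.
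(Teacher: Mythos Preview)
Your proposal is correct and follows essentially the same route as the paper: Picard iteration in the sup-in-$(t,x)$ $L^2(\Omega)$ norm, with the drift difference controlled via Cauchy--Schwarz in $y$ (in the form you write) and in time together with (\textbf{C2'}), the stochastic difference via the $L^2$-isometry/Burkh\"older bound \eqref{L2 burkholder}, summability of $(C_*^nT^n/n!)^{1/2}$ to get the Cauchy property, and Gr\"onwall for uniqueness. The only cosmetic differences are your choice of $Y^{(0)}(t,x)=e^{-t}Y_0(x)$ (the paper takes $Y_0(x)$) and the precise form of the constant $C_*$; your identification of the predictability/measurability bookkeeping as the one genuinely delicate point matches the paper's treatment exactly.
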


\begin{proof} The proof proceeds in a classical way, but where we are careful to interpret all stochastic integrals as described in Section \ref{Walsh}.

\vspace{0.2cm}
\noindent\textit{Uniqueness:} Suppose that $(Y(t, x))_{t\geq 0, x\in{\R^N}}$ and $(Z(t, x))_{t\geq 0, x\in{\R^N}}$ are both solutions to \eqref{NF GN} in the sense of Definition \ref{def:mild solution GN}. Let $D(t, x) = Y(t, x) - Z(t, x)$ for $x\in {\R^N}$ and $t\geq0$. Then we have
\begin{align*}
D(t, x) &= \int_0^te^{-(t-s)}\int_{\R^N} w(x, y)[G(Y(s, y)) - G(Z(s, y))]dyds \\
& \qquad + \int_0^t\int_{{\R^N}}e^{-(t-s)}[\sigma(Y(s, x)) - \sigma(Z(s, x))]\varphi(x-y)W(dsdy).
\end{align*}
Therefore
\begin{align*}
\mE{|D(t, x)|^2} &\leq 2\mE{\left(\int_0^te^{-(t-s)}\int_{\R^N} |w(x, y)||G(Y(s, y)) - G(Z(s, y))|dyds\right)^2} \\
&  + 2\mE{\left(\int_0^t\int_{{\R^N}}e^{-(t-s)}[\sigma(Y(s, x)) - \sigma(Z(s, x))]\varphi(x-y)W(dsdy)\right)^2}\\
&\leq 2t\int_0^te^{-2(t-s)}\mE{\left(\int_{\R^N} |w(x, y)||G(Y(s, y)) - G(Z(s, y))|dy\right)^2}ds \\
& \quad + 2\int_0^t\int_{{\R^N}}e^{-2(t-s)}\mE{|\sigma(Y(s, x)) - \sigma(Z(s, x))|^2}|\varphi(x-y)|^2dsdy,
\end{align*}
where we have used Cauchy-Schwarz and the $L^2$-version of Burkh\"older's inequality \eqref{L2 burkholder}. Thus, using the Lipschitz property of $\sigma$ and $G$,
\begin{align*}
\mE{|D(t, x)|^2} &\leq 2tC_G^2\int_0^te^{-2(t-s)}\mE{\left(\int_{\R^N} |w(x, y)||D(s, y)|dy\right)^2}ds \\
& \qquad + 2C_\sigma^2\|\varphi\|^2_{L^2({\R^N})}\int_0^te^{-2(t-s)}\mE{|D(s, x)|^2}ds.
\end{align*}
By the Cauchy-Schwarz inequality once again
\begin{align*}
\mE{|D(t, x)|^2} &\leq 2tC_G^2\|w(x, \cdot)\|_{L^1({\R^N})} \int_0^te^{-2(t-s)}\int_{\R^N} |w(x, y)|\,\mE{|D(s, y)|^2}dyds \\
& \qquad + 2C_\sigma^2\|\varphi\|^2_{L^2({\R^N})}\int_0^te^{-2(t-s)}\mE{|D(s, x)|^2}ds.
\end{align*}

Let $H(s) := \sup_{x\in{\R^N}}\E [| D(s, x)|^2]$, which is finite since we are assuming  $Y$ and $Z$ satisfy \eqref{thm: e+u GN L2 bound}.  Writing $K=2\max\{C^2_\sigma, C^2_G\}$, we have
\begin{align*}
\mE{|D(t, x)|^2} &\leq K\left[tC_w^2 +  \|\varphi\|_{L^2({\R^N})}^2\right]\int_0^t e^{-2(t-s)}H(s)ds \\
\Rightarrow H(t) & \leq K\left[tC_w^2 +  \|\varphi\|_{L^2({\R^N})}^2\right]\int_0^t H(s)ds. 
\end{align*}
An application of Gronwall's lemma then yields $\sup_{s\leq t}H(s) = 0$ for all $t\geq 0$.  Hence $Y(t, x) = Z(t, x)$ almost surely for all $t\geq0,  x\in {\R^N}$. 

\vspace{0.2cm}
\noindent\textit{Existence:}
Let $Y_0(t, x) = Y_0(x)$.  Then define iteratively for $n\in\mathbb{N}_0$, $t\geq0$, $x\in{\R^N}$,
\begin{align}
\label{Picard}
Y_{n+1}(t, x) &:= e^{-t} Y_0(x) + \int_0^te^{-(t-s)}\int_{\R^N} w(x, y)G(Y_n(s, y))dyds \nonumber\\
& \quad + \int_0^t\int_{{\R^N}}e^{-(t-s)}\sigma(Y_n(s, x))\varphi(x-y)W(dsdy).
\end{align}
We first check that the stochastic integral is well-defined, under the assumption that
\begin{equation}
\label{tocheck}
\sup_{t\in [0, T], x\in{\R^N}}\E(|Y_n(t, x)|^2) <\infty,
\end{equation}
for any $T>0$, which we know is true for $n=0$ by assumption, and we show by induction is also true for each integer $n\geq 1$ below.  To this end for any $T>0$
\begin{align*}
&\E\left[\int_0^T\int_{{\R^N}}e^{-2(t-s)}\sigma^2(Y_n(s, x))\varphi^2(x-y)dsdy\right]  \\
&\qquad \leq 2C_\sigma^2\|\varphi\|^2_{L^2({\R^N})}\int_0^T(1 + \E\left[|Y_n(s, x)|^2\right])ds\\
&\qquad \leq 2C_\sigma^2\|\varphi\|^2_{L^2({\R^N})}T\left[ 1 + \sup_{t\in [0, T], x\in{\R^N}}\mE{|Y_n(t, x)|^2}\right]<\infty.
\end{align*}
This shows that the integrand in the stochastic integral is in the space $\mathfrak{P}_W$ (for all $T>0$), which in turn implies that the stochastic integral in the sense of Walsh is indeed well-defined (by Theorem \ref{thm: int}).

To be rigorous, we must moreover check that the deterministic integral in \eqref{Picard} is well-defined.  When $n=0$, this follows from the fact that $x\mapsto Y_0(x)$ is Borel measurable almost surely.  For $n\geq 1$, we use the fact that the stochastic convolution 
\[
(t, x) \mapsto \int_0^t\int_{{\R^N}}e^{-(t-s)}\sigma(Y_{n-1}(s, x))\varphi(x-y)W(dsdy)
\]
is predictable if $(t, x)\mapsto Y_{n-1}(t, x)$ is predictable (this follows from the construction in Section \ref{Walsh} and is explicitly stated in \cite[Section 2.1]{Conus}).  Hence, for fixed $t$ the map $x\mapsto Y_n(t, x)$ is Borel measurable almost surely, which in turn allows us to well-define the deterministic integral in \eqref{Picard}.

Now define $D_n(t, x) := Y_{n+1}(t, x) - Y_{n}(t, x)$ for $n\in\mathbb{N}_0$, $t\geq0$ and $x\in{\R^N}$.  Then exactly as in the uniqueness calculation we have
\begin{align*}
\mE{|D_{n}(t, x)|^2} &\leq  2tC^2_GC_w\int_0^te^{-2(t-s)}\int_{\R^N}|w(x,y)|\mE{\left|D_{n-1}(s, y)\right|^2} dyds\\
& \quad + 2C^2_\sigma\|\varphi\|_{L^2({\R^N})}^2 \int_0^t\mE{\left|D_{n-1}(s,x)\right|^2}e^{-2(t-s)}ds .
\end{align*}
This implies that by setting $H_n(s) = \sup_{x\in {\R^N}} \mE{\left|D_{n}(s, x)\right|^2}$,
\begin{align}
\label{Hn}
H_n(t) 
&\leq K^n\left[tC_w^2 +  \|\varphi\|_{L^2({\R^N})}^2\right]^n \int_0^t\int_0^{t_1}\dots\int_0^{t_{n-1}}H_{0}(t_n)dt_n\dots dt_1,
\end{align}
for all $n\in\mathbb{N}_0$ and $t\geq0$.
Now, similarly, we can find a constant $C_t$ such that
\begin{align*}
&\mE{\left|D_0(s, x)\right|^2} \leq C_t\left( 1 + \sup_{x\in {\R^N}} \mE{|Y_0(x)|^2} \right),
\end{align*}
for any $x\in {\R^N}$ and $s\in[0, t]$, so that for $s\in[0, t]$,
\begin{align*}
H_0(s) = \sup_{x\in{\R^N}} \mE{\left|D_{0}(s, x) \right|^2} \leq C_t \left( 1 + \sup_{x\in {\R^N}} \mE{|Y_0(x)|^2} \right),
\end{align*}
Using this in \eqref{Hn} we see that,
\[
H_n(t) \leq {C}_tK^{n}\left[tC_w^2 +  \|\varphi\|_{L^2({\R^N})}^2\right]^{n}  \left( 1 + \sup_{x\in {\R^N}} \mE{|Y_0(x)|^2} \right)\frac{t^n}{n!},
\]
for all $t\geq0$.  This is sufficient to see that \eqref{tocheck} holds uniformly in $n$.  
By completeness, for each $t\geq 0$ and $x\in{\R^N}$ there exists $Y(t, x)\in L^2(\Omega, \cF, \mathbb{P})$ such that $Y(t, x)$ is the limit in $L^2(\Omega, \cF, \mathbb{P})$ of the sequence of  square-integrable random variables $(Y_n(t, x))_{n\geq1}$.   Moreover, the convergence is uniform on $[0, T]\times{\R^N}$, i.e.
\[
\sup_{t\in[0, T], x\in{\R^N}} \E\left|Y_n(t, x) - Y(t,x)\right|^2\to 0.
\]
From this we can see that \eqref{thm: e+u GN L2 bound} is satisfied for the random field $(Y(t, x))_{t\geq0, x\in{\R^N}}$.  It remains to show that $(Y(t, x))_{t\geq0, x\in{\R^N}}$ satisfies \eqref{mild solution GN} almost surely.   By the above uniform convergence, we have that
\begin{align*}
\mE{\left|\int_0^t\int_{{\R^N}}e^{-(t-s)} \left[\sigma(Y_n(s,x)) -  \sigma(Y(s,x)) \right]\varphi(x-y)W(dsdy)\right|^2} \to 0,
\end{align*}
and
\begin{align*}
\mE{\left|\int_0^te^{-(t-s)} \int_{{\R^N}}w(x, y)\left[G(Y_n(s,y)) -  G(Y(s,y)) \right]dsdy\right|^2} \to 0,
\end{align*}
uniformly for all $t\geq0$ and $x\in {\R^N}$.
Thus taking the limit as $n\to \infty$ in \eqref{Picard} (in the $L^2(\Omega, \cF, \mathbb{P})$ sense) proves that $(Y(t, x))_{t\geq0, x\in{\R^N}}$ does indeed satisfy \eqref{mild solution GN} almost surely.  The fact that it is predictable and adapted follows from the construction.
\end{proof}

In a very similar way, one can also prove that the solution remains $L^p$-bounded whenever the initial condition is $L^p$-bounded for any $p>2$.  Moreover, this also allows us to conclude that the solution has time continuous paths for all $x\in{\R^N}$.

\begin{thm}
\label{thm: Lp boundedness}
Suppose that we are in the situation of Theorem \ref{thm: e+u GN}, but in addition we have that $\sup_{x\in{\R^N}}\mE{|Y_0(x)|^p} <\infty$ for some $p>2$.  
Then the solution $(Y(t,x))_{t\geq0, x\in{\R^N}}$ to \eqref{NF GN} in the sense of Definition \ref{def:mild solution GN} is $L^p$-bounded on $[0,T]\times{\R^N}$ for any $T$ i.e.
\[
\sup_{t\in[0, T],x\in{\R^N}}\mE{\left|Y(t, x)\right|^p} <\infty,
\]
and the map $t\mapsto Y(t, x)$ has a continuous version for all $x\in{\R^N}$.
 If the initial condition has finite $p$-moments for all $p>2$, then $t\mapsto Y(t, x)$ has an $\eta$-H\"older continuous version, for any $\eta\in(0, 1/2)$ and any $x\in{\R^N}$.
\end{thm}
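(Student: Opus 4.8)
The plan is to follow the Picard scheme already set up in the proof of Theorem~\ref{thm: e+u GN}, replacing the $L^2$ estimates by $L^p$ estimates via Burkh\"older's inequality (Theorem~\ref{thm: burkholder}), and then to derive the time regularity from a Kolmogorov continuity argument in the time variable alone.

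\emph{Step 1: $L^p$-boundedness.} I would revisit the Picard iterates $(Y_n)_{n\geq0}$ defined by \eqref{Picard} and set $M_n(t):=\sup_{x\in\R^N}\mE{|Y_n(t,x)|^p}$. Splitting the three terms of \eqref{Picard} with the inequality $|a+b+c|^p\leq 3^{p-1}(|a|^p+|b|^p+|c|^p)$, the first term contributes at most $\sup_x\mE{|Y_0(x)|^p}<\infty$; the drift term is bounded \emph{deterministically} by $(C_GC_w)^p$, since $G$ is bounded and $\|w(x,\cdot)\|_{L^1(\R^N)}\leq C_w$ by (\textbf{C2'}); and for the stochastic term I would apply Burkh\"older's inequality, then Jensen's inequality in the time variable together with Fubini, and finally the linear growth of $\sigma$, to obtain a bound of the form $C_{T,p}\big(1+\int_0^t M_n(s)\,ds\big)$. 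This yields $M_{n+1}(t)\leq A+B\int_0^tM_n(s)\,ds$ for constants $A,B$ depending only on $T$ and $p$; iterating gives $M_n(t)\leq Ae^{Bt}$ uniformly in $n$ on $[0,T]$. Since $Y_n(t,x)\to Y(t,x)$ in $L^2(\Omega,\cF,\mathbb{P})$ for each fixed $(t,x)$ (hence along a subsequence almost surely), Fatou's lemma then gives $\sup_{t\in[0,T],\,x\in\R^N}\mE{|Y(t,x)|^p}<\infty$. One could alternatively obtain the same bound by a localisation/stopping argument applied directly to \eqref{mild solution GN}, but recycling the iteration is cleaner.

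\emph{Step 2: time increments.} Fix $x\in\R^N$ and $0\leq s\leq t\leq T$ with $t-s\leq1$. In \eqref{mild solution GN}, the increment $Y(t,x)-Y(s,x)$ has three parts. The term $(e^{-t}-e^{-s})Y_0(x)$ is bounded in $L^p$ by $|t-s|^p\,\mE{|Y_0(x)|^p}$. The deterministic convolution is in fact Lipschitz in $t$: its integrand $\int_{\R^N}w(x,y)G(Y(u,y))\,dy$ is bounded uniformly by $C_GC_w$, and writing the difference as $(e^{-t}-e^{-s})\int_0^se^u(\cdots)\,du+e^{-t}\int_s^te^u(\cdots)\,du$ gives an $O(|t-s|)$ bound, hence $O(|t-s|^p)$ in $L^p$. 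The stochastic convolution difference I would split as
\[
\int_0^s\!\int_{\R^N}\big(e^{-(t-u)}-e^{-(s-u)}\big)\sigma(Y(u,x))\varphi(x-y)\,W(du\,dy)+\int_s^t\!\int_{\R^N}e^{-(t-u)}\sigma(Y(u,x))\varphi(x-y)\,W(du\,dy),
\]
and apply Burkh\"older's inequality to each piece; using $|e^{-(t-u)}-e^{-(s-u)}|\leq|t-s|$, $\int_{\R^N}\varphi^2=\|\varphi\|_{L^2(\R^N)}^2$, Jensen in time, the linear growth of $\sigma$, and the uniform $L^p$ bound from Step~1, the two pieces are $O(|t-s|^p)$ and $O(|t-s|^{p/2})$ respectively. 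Altogether $\mE{|Y(t,x)-Y(s,x)|^p}\leq C_{T,p}\,|t-s|^{p/2}$.

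\emph{Step 3: Kolmogorov, and the main obstacle.} Since $p>2$, write $p/2=1+(p/2-1)$ with $p/2-1>0$, so the Kolmogorov continuity theorem (in the single variable $t$, with $x$ fixed) provides a modification of $t\mapsto Y(t,x)$ that is continuous --- indeed $\eta$-H\"older for every $\eta<\tfrac12-\tfrac1p$. When $\sup_x\mE{|Y_0(x)|^p}<\infty$ for \emph{all} $p>2$, letting $p\to\infty$ shows $t\mapsto Y(t,x)$ admits an $\eta$-H\"older continuous version for every $\eta\in(0,1/2)$. The only genuinely delicate point is the stochastic-convolution increment in Step~2: one must account both for the ``fresh'' stochastic mass accumulated over $[s,t]$ and for the ``aging'' of the semigroup factor $e^{-(t-u)}$ over $[0,s]$, and check that the resulting power of $|t-s|$ strictly exceeds $1$ so that Kolmogorov applies --- it is precisely the linear-growth control of $\sigma$ combined with the uniform $L^p$ bound that keeps these Burkh\"older estimates finite. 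As expected, since there is no parabolic smoothing here, the exponent $1/2$ is optimal, matching the time regularity of $W^\varphi$ itself.
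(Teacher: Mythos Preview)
Your proposal is correct and follows essentially the same route as the paper's (terse) proof: Burkh\"older's inequality together with the linear growth of $\sigma$ and the bound on $\|w(x,\cdot)\|_{L^1}$ for the $L^p$ estimate, then the increment bound $\mE{|Y(t,x)-Y(s,x)|^p}\leq C_T^{(p)}(t-s)^{p/2}$ and Kolmogorov. The only cosmetic difference is that the paper applies Gronwall directly to $t\mapsto\sup_x\mE{|Y(t,x)|^p}$ using the mild formulation \eqref{mild solution GN}, whereas you pass through the Picard iterates and Fatou --- your route is slightly more careful in that it sidesteps the a priori finiteness issue, but both are standard and equivalent here.
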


\begin{proof}
The proof of the first part of this result uses similar techniques as in the proof of Theorem \ref{thm: e+u GN} in order to bound $\mE{|Y(t, x)|^p}$ uniformly in $t\in[0, T]$ and $x\in{\R^N}$.  In particular, we use the form of $Y(t, x)$ given by \eqref{mild solution GN}, Burkh\"older's inequality (see Theorem \ref{thm: burkholder}), H\"older's inequality and Gronwall's lemma, as well as the conditions imposed on $w$, $\sigma$, $G$ and $\varphi$.

For the time continuity, we again use similar techniques to achieve the bound
\[
\mE{\left|Y(t, x) - Y(s, x)\right|^p} \leq C_T^{( p)}\left(1 + \sup_{r\in[0, T],y\in{\R^N}}\mE{|Y(r, y)|^p}\right)(t-s)^\frac{p}{2},
\]
for all $s,t\in[0, T]$ with $s\leq t$ and $x\in{\R^N}$, for some constant $C_T^{(p)}$.  The results then follow from Kolmogorov's continuity theorem once again.
\end{proof}

\subsubsection{Spatial regularity of solution}

As mentioned in the introduction to this section, the spatial regularity of the solution $(Y(t, x))_{t\geq0, x\in{\R^N}}$ to \eqref{NF GN} is of interest.  In particular we would like to find conditions under which it is at least continuous in space.  As we saw in Lemma \ref{lem:phi regularity}, under the weak condition on $\varphi$ given by \eqref{phi regularity}, we have that the spatially smoothed space-time white noise is continuous in space.  We here show that under this assumption together with a H\"older continuity type condition on the neural field kernel $w$, the solution $(Y(t, x))_{t\geq0, x\in{\R^N}}$ inherits the spatial regularity of the the driving noise.

The condition we introduce on $w$ is the following:
\begin{equation}\label{eq:C3'}
\tag{\textbf{C3'}}
\exists K_w \geq 0 \ \ \mathrm{s.t.}\ \ \|w(x, \cdot) - w(\tilde{x}, \cdot)\|_{L^1({\R^N})} \leq L_w |x-\tilde{x}|^\alpha, \quad \forall x, \tilde{x} \in{\R^N},
\end{equation}
for some $\alpha\in(0, 1]$.

\begin{rem}
This condition is certainly satisfied for all typical choices of neural field kernel $w$.  In particular, any smooth rapidly decaying function will satisfy $({\bf C3'})$.
\end{rem}

\begin{thm}[Regularity]
Suppose that we are in the situation of Theorem \ref{thm: e+u GN} and 
\[
\sup_{x\in{\R^N}}\mE{|Y_0(x)|^p} < \infty
 \]
for all $p\geq2$.  Suppose moreover that there exists $\alpha\in(0, 1]$ such that
\begin{itemize}
\item $w$ satisfies {\rm ({\bf C3'})};
\item $\varphi$ satisfies \eqref{phi regularity} i.e.
\[
\|\varphi - {\boldsymbol \tau}_z(\varphi)\|_{L^2({\R^N})} \leq C_\varphi |z|^\alpha, \qquad \forall z\in{\R^N},
\]
where ${\boldsymbol \tau}_z$ indicates the shift by $z\in{\R^N}$ operator;
\item $x\mapsto Y_0(x)$ is $\alpha$-H\"older continuous.
\end{itemize}
Then $(Y(t, x))_{t\geq0, x\in{\R^N}}$ has a modification such that $(t, x)\mapsto Y(t,x)$ is $(\eta_1, \eta_2)$-H\"older continuous, for any $\eta_1 \in (0,1/2)$ and $\eta_2 \in (0,\alpha)$.
\end{thm}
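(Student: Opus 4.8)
The plan is to argue exactly as in the proofs of Theorems~\ref{thm: e+u GN} and~\ref{thm: Lp boundedness}, working from the mild formulation~\eqref{mild solution GN}, and to establish a moment bound on the \emph{spatial} increments of $Y$ of the form
\[
\mE{|Y(t,x) - Y(t,\tilde{x})|^p} \leq C_T\,|x - \tilde{x}|^{p\alpha}, \qquad x, \tilde{x} \in {\R^N},\ t\in[0,T],
\]
valid for every $p\geq 2$ (here the hypothesis $\sup_{x}\mE{|Y_0(x)|^p}<\infty$ for all $p\geq2$ is used to guarantee, via Theorem~\ref{thm: Lp boundedness}, that $\sup_{[0,T]\times{\R^N}}\mE{|Y(t,x)|^p}<\infty$ for all such $p$). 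Combining this with the time-increment estimate $\mE{|Y(t,x) - Y(s,x)|^p} \leq C_T^{(p)}|t-s|^{p/2}$ already obtained in the proof of Theorem~\ref{thm: Lp boundedness}, a triangle inequality yields
\[
\mE{|Y(t,x) - Y(s,\tilde{x})|^p} \leq C_T\big(|t-s|^{p/2} + |x - \tilde{x}|^{p\alpha}\big), \qquad s,t\in[0,T],\ x,\tilde{x}\in{\R^N}.
\]
Taking $p$ large enough that $p\alpha>N$ and $p/2>1$, a multiparameter (anisotropic) version of Kolmogorov's continuity theorem then produces a jointly continuous modification of $Y$ that is $\eta_1$-H\"older in $t$ for any $\eta_1<1/2$ and $\eta_2$-H\"older in $x$ for any $\eta_2<\alpha$.

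To obtain the spatial bound, I would decompose $Y(t,x) - Y(t,\tilde{x})$, using~\eqref{mild solution GN}, into three pieces: the initial term $e^{-t}(Y_0(x) - Y_0(\tilde{x}))$; the deterministic integral $\int_0^t e^{-(t-s)}\int_{\R^N}[w(x,y) - w(\tilde{x},y)]G(Y(s,y))\,dy\,ds$; and the stochastic integral $\int_0^t\int_{\R^N}e^{-(t-s)}[\sigma(Y(s,x))\varphi(x-y) - \sigma(Y(s,\tilde{x}))\varphi(\tilde{x}-y)]\,W(ds\,dy)$. The first piece is controlled directly by the $\alpha$-H\"older continuity of $Y_0$ (together with its moment bound). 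For the second, the bound $\sup_a|G(a)|\leq C_G$ reduces it, after raising to the power $p$ and using $\int_0^te^{-(t-s)}ds\leq1$, to $C_G^p\,\|w(x,\cdot)-w(\tilde{x},\cdot)\|_{L^1({\R^N})}^p$, which is $\leq C_G^p L_w^p|x-\tilde{x}|^{p\alpha}$ by~{\rm ({\bf C3'})}.

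For the stochastic piece, Burkh\"older's inequality (Theorem~\ref{thm: burkholder}) bounds its $p$-th moment by a constant multiple of $\mE{\big(\int_0^t\int_{\R^N}e^{-2(t-s)}|\sigma(Y(s,x))\varphi(x-y) - \sigma(Y(s,\tilde{x}))\varphi(\tilde{x}-y)|^2\,ds\,dy\big)^{p/2}}$. Writing the integrand as $\sigma(Y(s,x))[\varphi(x-y)-\varphi(\tilde{x}-y)] + [\sigma(Y(s,x))-\sigma(Y(s,\tilde{x}))]\varphi(\tilde{x}-y)$ and using Minkowski's inequality, the first summand is handled by~\eqref{phi regularity} (which contributes a factor $C_\varphi^p|x-\tilde{x}|^{p\alpha}$) together with the linear growth of $\sigma$ and the uniform $L^p$-bound on $Y$, while the second is handled by the Lipschitz property of $\sigma$ and the finiteness of $\|\varphi\|_{L^2({\R^N})}$. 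Collecting all three pieces and applying Jensen's inequality to the remaining time integral gives, with $\phi(t):=\mE{|Y(t,x)-Y(t,\tilde{x})|^p}$,
\[
\phi(t) \leq C_1\,|x-\tilde{x}|^{p\alpha} + C_2\int_0^t\phi(s)\,ds,
\]
where $\phi$ is bounded on $[0,T]$ by the $L^p$-boundedness of $Y$; Gronwall's lemma then yields $\phi(t)\leq C_1e^{C_2T}|x-\tilde{x}|^{p\alpha}$, completing the spatial estimate.

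The main obstacle is the stochastic-integral estimate: one must carefully split off the contribution that is genuinely of order $|x-\tilde{x}|^{p\alpha}$ — arising from the spatial regularity~\eqref{phi regularity} of $\varphi$ and absorbing the a priori $L^p$-bound of $Y$ — from the contribution proportional to $\mE{|Y(s,x)-Y(s,\tilde{x})|^p}$, which must be kept on the right-hand side for the Gronwall argument to close. A secondary point is that recovering the two H\"older exponents $\eta_1<1/2$ and $\eta_2<\alpha$ separately, rather than merely their minimum, requires the anisotropic form of Kolmogorov's theorem and hence that all the moment estimates above hold for \emph{every} $p\geq2$; this is exactly why the moment hypothesis on $Y_0$ is strengthened in this theorem relative to Theorem~\ref{thm: e+u GN}.
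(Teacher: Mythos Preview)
Your proposal is correct and follows essentially the same route as the paper: decompose $Y(t,x)-Y(t,\tilde{x})$ via the mild formulation into the initial, deterministic, and stochastic pieces, control the first two directly using the $\alpha$-H\"older assumption on $Y_0$ and condition {\rm(\textbf{C3'})}, split the stochastic piece into a part of order $|x-\tilde{x}|^{p\alpha}$ (via \eqref{phi regularity} and the uniform $L^p$-bound from Theorem~\ref{thm: Lp boundedness}) and a Gronwall term (via the Lipschitz property of $\sigma$), then close with Gronwall and Kolmogorov. The only cosmetic differences are the order of the two summands in your splitting of the stochastic integrand (which is immaterial) and your more explicit invocation of the anisotropic Kolmogorov theorem to recover the two H\"older exponents separately.
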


\begin{proof}
Let $(Y(t, x))_{t\geq0, x\in {\R^N}}$ be the mild solution to \eqref{NF GN}, which exists and is unique by Theorem \ref{thm: e+u GN}.  The stated regularity in time is given in Theorem \ref{thm: Lp boundedness}.  It thus remains to prove the regularity in space.

Let $t\geq0$, $x\in{\R^N}$.  Then by \eqref{mild solution GN} 
\begin{align}
\label{reg thm: mild solution}
Y(t, x) &=  e^{-t}Y_0(x) + I_1(t, x) + I_2(t, x),
\end{align}
for all $t\geq0$ and $x\in {\R^N}$, where $I_1(t, x) =  \int_0^te^{-(t-s)}\int_{\R^N} w(x, y)G(Y(s, y))dyds$ and $I_2(t, x) = \int_0^t\int_{{\R^N}}e^{-(t-s)}\sigma(Y(s, x))\varphi(x-y)W(dsdy)$.

Now let $p\geq2$.  The aim is to estimate $\mE{|Y(t, x) - Y(t, \tilde{x})|^p}$ for $x, \tilde{x} \in {\R^N}$ and then to use Kolmogorov's theorem to get the stated spatial regularity. To this end, we have that
\begin{align}
\label{reg in x 1}
&\mE{\left|I_1(t, x) - I_1(t, \tilde{x}) \right|^p}\nonumber \\
&\leq \mE{\left( \int_0^t\int_{\R^N} |w(x, y) - w(\tilde{x}, y)||G(Y(s, y))|dyds \right)^p}\nonumber\\
&\leq C_G^pt^p \|w(x, \cdot) - w(\tilde{x}, \cdot)\|_{L^1({\R^N})}^p\nonumber\\
&\leq C_G^pt^pK^p_w |x-\tilde{x}|^{p\alpha},
\end{align}
where we have used (\textbf{C3'}).  Moreover, by H\"older's and Burkh\"older's inequalities once again, we see that
\begin{align*}
&\mE{\left|I_2(t, x) - I_2(t, \tilde{x}) \right|^p}\nonumber \\
&\quad\leq 2^{p-1}\mE{\left|\int_0^t\int_{{\R^N}}e^{-(t-s)}\left[\sigma(Y(s, x))-  \sigma(Y(s, \tilde{x}))\right] \varphi(x - y)W(dyds)\right|^p} \nonumber\\
&\qquad + 2^{p-1}\mE{\left|\int_0^t\int_{{\R^N}}e^{-(t-s)}\sigma(Y(s, \tilde{x}))[\varphi(x- y) - \varphi(\tilde{x}-y)]W(dyds)\right|^p} \nonumber\\
&\quad\leq 2^{p-1}c_p\mE{\left(\int_0^t\int_{{\R^N}}|\sigma(Y(s, x))-  \sigma(Y(s, \tilde{x}))|^2\left| \varphi(x - y)\right|^2dyds\right)^\frac{p}{2}} \nonumber\\
&\qquad + 2^{p-1}c_p\mE{\left(\int_0^t\int_{{\R^N}}\left|\sigma(Y(s, \tilde{x}))\right|^2|\varphi(x - y) - \varphi(\tilde{x}-y)|^2dyds\right)^\frac{p}{2}}\nonumber,
\end{align*}
for all $x, \tilde{x} \in {\R^N}$ and $p\geq2$.  Thus
\begin{align}
\label{reg in x 2}
&\mE{\left|I_2(t, x) - I_2(t, \tilde{x}) \right|^p}\nonumber \\
&\leq 2^{p-1}c_pC_\sigma^pt^{\frac{p}{2} -1}\|\varphi\|_{L^2({\R^N})}^p\int_0^t\mE{|Y(s, x)-  Y(s, \tilde{x})|^p}ds \nonumber\\
&\qquad + 2^{2(p-1)}c_pC_\sigma^pt^{\frac{p}{2} }\|\varphi - \boldsymbol{\tau}_{\tilde{x}-x}(\varphi)\|_{L^2({\R^N})}^p\left(1 + \sup_{s\in[0, T], y\in{\R^N}}\mE{\left|Y(s, y)\right|^p}\right),
\end{align}
where we note that the right-hand side is finite thanks to Theorem \ref{thm: Lp boundedness}.
Returning to \eqref{reg thm: mild solution} and using estimates \eqref{reg in x 1} and \eqref{reg in x 2} we see that there exists a constant $C_T^{( p)}$ (depending on $T, p, C_G, K_w, C_\sigma, C_\varphi, \|\varphi\|_{L^2({\R^N})}$ as well as $\sup_{s\in[0, T], y\in{\R^N}}\mE{|Y(s, y)|^p}$), such that
\begin{align*}
&\mE{\left|Y(t, x) - Y(t, \tilde{x}) \right|^p}\nonumber \\
&\quad\leq C_T^{( p)}\left[\mE{\left|Y_0(x) - Y_0(\tilde{x}) \right|^p} + |x - \tilde{x}|^{p\alpha} + \int_0^t\mE{|Y(s, x)-  Y(s, \tilde{x})|^p}ds \right]\\
&\quad\leq C_T^{( p)}\left[|x - \tilde{x}|^{p\alpha} + \int_0^t\mE{|Y(s, x)-  Y(s, \tilde{x})|^p}ds \right],
\end{align*}
where the last line follows from our assumptions on $Y_0$ and by adjusting the constant $ C_T^{( p)}$.
This bound holds for all $t\geq 0$, $x, \tilde{x} \in{\R^N}$ and $p\geq2$.  The proof is then completed using Gronwall's inequality,  and Kolmogorov's continuity theorem once again.
\end{proof}



\section{Comparison of the two approaches}
\label{sec: comparison}
The purpose of this section is to compare the two different approaches taken in Sections \ref{EE} and \ref{random fields} above to give sense to the stochastic neural field equation.

Our starting point is the random field solution, given by Theorem \ref{thm: e+u GN}.  Suppose that the conditions of Theorem \ref{thm: e+u GN} are satisfied (i.e. $\varphi\in L^2(\R^N)$, $\sigma:\R\to\R$ Lipschitz, $G:\R\to\R$ Lipschitz and bounded, $w$ satisfies (\textbf{C2'}) and the given assumptions on the initial condition).
Then, by that result, there exists a unique random field $(Y(t, x))_{t\geq0, x\in\R^N}$ such that
\begin{align}
\label{Walsh solution}
Y(t, x) & = e^{-t}Y_0(x) + \int_0^te^{-(t-s)}\int_{\R^N} w(x,y)G(Y(s,y))dyds \nonumber\\
&\qquad \qquad + \int_0^t\int_{\R^N}e^{-(t-s)}\sigma(Y(s, x))\varphi(x-y)W(ds dy)
\end{align}
such that
\begin{equation}
\label{uniform bound}
\sup_{t\in[0, T], x\in\R^N}\E\left[|Y(t, x)|^2\right] < \infty,
\end{equation}
for all $T>0$, and we say that $(Y(t, x))_{t\geq0, x\in\R^N}$ is the random field solution to the stochastic neural field equation.  

The relationship between this random field solution to the stochastic neural field equation, and a solution constructed as a Hilbert space valued process according to Section \ref{EE} is given in the following theorem.

\begin{thm}
\label{thm: comparison}
Suppose the conditions of Theorem \ref{thm: e+u GN} and Theorem \ref{thm: Lp boundedness} are satisfied.  Moreover suppose that {\rm (\textbf{C1'})} is satisfied for some $\rho_w\in L^1(\R^N)$, and fix $U = L^2(\R^N)$ and $H= L^2(\R^N, \rho_w)$. 
 Then the random field $(Y(t, x))_{t\geq0}$ satisfying \eqref{Walsh solution} and \eqref{uniform bound} is such that $(Y(t))_{t\geq0} := (Y(t, \cdot))_{t\geq0}$ is the unique mild $H$-valued solution to the stochastic evolution equation
\begin{equation}
\label{DZ interp}
dY(t) = [-Y(t) +  \mathbf{F}(Y(s))]dt + \mathbf{B}(Y(t))dW(t),\qquad t\in[0, T],
\end{equation}
where $(W(t))_{t\geq0}$ is a $U$-valued $Q$-Wiener process, $\mathbf{B}:H \to L_0(U, H)$ is given by
\[
\mathbf{B}(h)(u)(x) := \int_{\R^N}\sigma(h(x))\varphi(x-y)u(y)dy, \qquad h \in H,\ u\in U.
\]
and $\mathbf{F}:H \to H$ is given by
\[
\mathbf{F}(h)(x) = \int_{\R^N}w(x,y)G(h(y))dy, \quad x\in \R^N, \ h\in H.
\]
 \end{thm}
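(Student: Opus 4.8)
The plan is to cast equation \eqref{DZ interp} in the setting of Theorem \ref{D-Z:e+u}, so that it has a unique mild $H$-valued solution, and then to show that the random field $Y$ produced by Theorem \ref{thm: e+u GN}, read as the process $t\mapsto Y(t,\cdot)$, is precisely that solution; the uniqueness assertion of Theorem \ref{D-Z:e+u} then gives the claim. Hypothesis {\rm (H1)} needs nothing, since $\mathbf A=-\mathbf{Id}$ generates $S(t)=e^{-t}\mathbf{Id}$. The operator $\mathbf F$ is exactly the one handled in case (ii) of the proof of Theorem \ref{NF SEE:e+u}: condition {\rm (\textbf{C2'})} with $\rho_w\in L^1(\R^N)$ shows that $\mathbf F$ maps $H$ into a ball of $H$, which together with its continuity gives {\rm (H2)} and the linear growth of $\mathbf F$ in {\rm (H4)}, while {\rm (\textbf{C1'})} yields the global Lipschitz estimate for $\mathbf F$ in {\rm (H4)}.

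For the diffusion operator, write $\mathbf B(h)=M_{\sigma(h(\cdot))}\circ B$, where $M_{\sigma(h(\cdot))}$ denotes multiplication by $\sigma(h(\cdot))$ and $B$ is the convolution operator of \eqref{B} (here $\varphi\in L^2(\R^N)$, as in Theorem \ref{thm: e+u GN}). Combining the pointwise bound $|B(u)(x)|\leq\|\varphi\|_{L^2(\R^N)}\|u\|_{L^2(\R^N)}$ with the Lipschitz and linear growth properties of the scalar $\sigma$, one obtains
\[
\|\mathbf B(g)-\mathbf B(h)\|_{L_0(U,H)}\leq C_\sigma\|\varphi\|_{L^2(\R^N)}\|g-h\|_H,\qquad \|\mathbf B(h)\|_{L_0(U,H)}^2\leq 2C_\sigma^2\|\varphi\|_{L^2(\R^N)}^2\big(\|\rho_w\|_{L^1(\R^N)}+\|h\|_H^2\big),
\]
so that, by Example \ref{bounded is HS}, $\mathbf B(g)$ and $\mathbf B(h)$ satisfy the corresponding bounds in $L_2(Q^{\frac{1}{2}}(U),H)$. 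This gives {\rm (H3)} and the $\mathbf B$-part of {\rm (H4)}; measurability is automatic, the coefficients being continuous and time-independent, so Theorem \ref{D-Z:e+u} applies to \eqref{DZ interp}.

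Next I would check that $(Y(t))_{t\geq0}:=(Y(t,\cdot))_{t\geq0}$ is an admissible $H$-valued process. Joint measurability of $(s,x,\omega)\mapsto Y(s,x)$, inherited from the Picard scheme in the proof of Theorem \ref{thm: e+u GN}, makes $x\mapsto Y(t,x)$ Borel measurable for each $t$, and \eqref{uniform bound} gives
\[
\mE{\|Y(t)\|_H^2}=\int_{\R^N}\mE{|Y(t,x)|^2}\,\rho_w(x)\,dx\leq\|\rho_w\|_{L^1(\R^N)}\sup_{x\in\R^N}\mE{|Y(t,x)|^2}<\infty,
\]
so $Y(t)$ is a square-integrable $H$-valued random variable, and integrating over $[0,T]$ yields $\mE{\int_0^T\|Y(s)\|_H^2\,ds}<\infty$; in particular \eqref{P-finiteness} holds. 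Adaptedness, together with joint measurability and the $L^2$-continuity from Theorem \ref{thm: Lp boundedness}, provide a predictable version of $(Y(t))_{t\geq0}$ as an $H$-valued process.

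It then remains to verify the mild identity
\[
Y(t)=e^{-t}Y_0+\int_0^te^{-(t-s)}\mathbf F(Y(s))\,ds+\int_0^te^{-(t-s)}\mathbf B(Y(s))\,dW(s)\qquad\text{in }H,\ \P\text{-a.s.}
\]
For the Bochner integral term one tests against an arbitrary $g\in H$ and uses the classical Fubini theorem (the needed integrability being supplied by {\rm (\textbf{C2'})} and the boundedness of $G$) to identify, for $\rho_w$-almost every $x$, its value at $x$ with $\int_0^te^{-(t-s)}\int_{\R^N}w(x,y)G(Y(s,y))\,dy\,ds$, matching the drift term in \eqref{Walsh solution}. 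The heart of the matter, and what I expect to be the main obstacle, is the identification of the two stochastic convolutions: one must show that evaluating the $H$-valued stochastic integral $\int_0^te^{-(t-s)}\mathbf B(Y(s))\,dW(s)$ (in the sense of Section \ref{int Q-Wiener}) at a point $x$ reproduces the Walsh martingale-measure integral $\int_0^t\!\int_{\R^N}e^{-(t-s)}\sigma(Y(s,x))\varphi(x-y)\,W(ds\,dy)$ appearing in \eqref{Walsh solution}. The route I would take exploits that $\mathbf B(h)$ is in fact Hilbert--Schmidt from $L^2(\R^N)$ into $L^2(\R^N,\rho_w)$, since its kernel $(x,y)\mapsto\sigma(h(x))\varphi(x-y)$ satisfies $\int_{\R^N}\!\int_{\R^N}|\sigma(h(x))\varphi(x-y)|^2\rho_w(x)\,dx\,dy=\|\varphi\|_{L^2(\R^N)}^2\int_{\R^N}|\sigma(h(x))|^2\rho_w(x)\,dx<\infty$; this square-integrability, a consequence of $\rho_w\in L^1(\R^N)$ and $\varphi\in L^2(\R^N)$, is precisely what reconciles the white-noise driver of \eqref{Walsh solution} with an $H$-valued Wiener noise. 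One then tests the $H$-valued integral against $g\in H$, applies the stochastic Fubini theorem, and recognises the resulting scalar stochastic integral on the Walsh side as $\int_{\R^N}g(x)\rho_w(x)\big(\int_0^t\!\int_{\R^N}e^{-(t-s)}\sigma(Y(s,x))\varphi(x-y)\,W(ds\,dy)\big)\,dx$; this is the concrete instance of the general equivalence between the two stochastic integration theories recorded in \cite{Dalang}. With both integral terms matched $\rho_w$-almost everywhere in $x$, the process $(Y(t,\cdot))_{t\geq0}$ is seen to be a mild $H$-valued solution of \eqref{DZ interp}, and the uniqueness in Theorem \ref{D-Z:e+u} identifies it with the solution constructed there.
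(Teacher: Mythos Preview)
Your proposal is correct and follows essentially the same strategy as the paper: first verify {\rm(H1)--(H4)} so that Theorem~\ref{D-Z:e+u} applies to \eqref{DZ interp}, and then identify the two stochastic convolutions by invoking the equivalence between the Walsh and Da~Prato--Zabczyk integrals established in \cite{Dalang}. The only noteworthy difference is in how the identification is carried out: you test the $H$-valued integral against $g\in H$ and appeal to a stochastic Fubini argument, whereas the paper works explicitly with the cylindrical Wiener process $\mathcal W$ (via \cite[Proposition~2.6]{Dalang}), embeds it into a trace-class $Q$-Wiener process through a Hilbert--Schmidt operator $J$ (\cite[Proposition~3.6]{Dalang}), and then compares term-by-term in orthonormal bases of $U$ and $H$ using \cite[Proposition~3.10]{Dalang}. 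Your observation that $\mathbf B(h)$ is itself Hilbert--Schmidt from $L^2(\R^N)$ into $L^2(\R^N,\rho_w)$ (the kernel being square-integrable against $\rho_w(x)\,dx\,dy$) is a clean way to see why the white-noise driver can be reconciled with a $Q$-Wiener process; the paper does not state this explicitly but it is implicit in its construction. In short, the two arguments are the same in substance, with yours slightly more conceptual and the paper's more constructive about the specific $Q$-Wiener process appearing in the statement.
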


 \begin{rem}
 \label{rem: comparison}
 We note that the evolution equation \eqref{DZ interp} is subtly different to \eqref{NF SEE} considered in Section \ref{Application}, in that the noise term added here is not generally of the form $\tilde{\sigma}(Y(t))\circ BdW(t)$ for some $\tilde{\sigma}: H \to L_0(H)$, with $B:U\to H$ given by
 \[
 B(u)(x) = \int_{\R^N}\tilde{\varphi}(x-y)u(y)dy, \quad x\in\R^N, \ u\in U,
 \]
 for some $\tilde{\varphi} \in L^1(\R^N)$.
 Indeed, even if the $\varphi$ we consider in \eqref{Walsh solution} is also in $L^1(\R^N)$, in order for the noise term $\mathbf{B}(Y(t))dW(t)$ in \eqref{DZ interp} to have this structure, since $\mathbf{B}(h)(u)(x) = \sigma(h(x))B(u)(x)$ for all $h\in H$, $u\in U$ and $x\in\R^N$, we would require $\tilde{\sigma}$ to be defined by
\[
\tilde{\sigma}(h_1)(h_2)(x) = \sigma(h_1(x))h_2(x), \qquad \forall x\in\R^N,\ h_1, h_1\in H.
\]
This is not a well-defined map $H \to L_0(H)$ unless $\sigma$ is bounded.  

However, in the case when $\sigma$ is bounded (for example $\sigma \equiv 1$), under the assumptions that $\varphi\in L^1(\R^N)\cap L^2(\R^N)$ and $w$ satisfies {\rm (\textbf{C1'})}, the above result shows that the random field solution is a special case of the $H$-valued solution to \eqref{NF SEE} constructed in Section \ref{Application}, where the noise term is given by $\tilde{\sigma}(Y(t))\circ BdW(t)$.
 \end{rem}

\begin{proof}[Proof of Theorem \ref{thm: comparison}]

We first check that \eqref{DZ interp} does indeed have a mild solution according to Theorem \ref{D-Z:e+u}.  This does not follow directly from Theorem \ref{NF SEE:e+u} (see Remark \eqref{rem: comparison}).  However, under the current assumptions, it is easy to check that conditions (H1) - (H4) of Theorem \ref{D-Z:e+u} are satisfied.  Indeed, we can follow most of the proof of Theorem  \ref{NF SEE:e+u}, inserting where necessary the facts that
$\mathbf{B}:H \to L_0(U,H)$ is well-defined, since for any $h\in H$, $u\in U$,
\begin{align*}
\|\mathbf{B}(h)(u)\|^2_{H} &=  \int_{\R^N}\sigma^2(h(x))\left(\int_{\R^N}\varphi(x-y)u(y)dy\right)^2\rho_w(x)dx\\
&\leq C_\sigma^2\|\varphi\|_{L^2(\R^N)}^2 \|u\|^2_U(\|\rho_w\|_{L^1(\R^N)} + \|h\|_{H}^2),
\end{align*}
and moreover that (see Example \ref{bounded is HS})
\begin{align*}
\|\mathbf{B}(h_1) - \mathbf{B}(h_2)\|^2_{L_2(Q^\frac{1}{2}(U), H)} &\leq \mathrm{Tr}(Q) \|\mathbf{B}(h_1) - \mathbf{B}(h_2)\|^2_{L_0(U, H)}\\
& \leq  \mathrm{Tr}(Q)C_\sigma^2\|\varphi\|_{L^2(\R^N)}^2\|h_1 - h_2\|_{H}^2,
\end{align*}
for all $h_1, h_2\in H$.  Finally, by the assumptions on the initial condition, and since $\rho_w\in L^1(\R^N)$, we see that $Y_0\in H$ is $\cF_0$-measurable and $\mE{\|Y_0\|_H^p}<\infty$ for all $p\geq2$.  Hence we can apply Theorem \ref{D-Z:e+u}.

The proof of the result involves some technical definition chasing, and in fact is contained in \cite{Dalang}, though rather implicitly.  It is for this reason that we carry out the proof explicitly in our situation, by closely following \cite[Proposition 4.10]{Dalang}. The most important point is to relate the stochastic integrals that appear in the two different formulations of a solution. To this end, define
\[
\cI(t, x) :=  \int_0^t\int_{\R^N}e^{-(t-s)}\sigma(Y(s, x))\varphi(x-y)W(ds dy), \quad x\in\R^N,\ t\geq0,
\]
to be the Walsh integral that appears in the random field solution \eqref{Walsh solution}.  Our aim is to show that
\begin{equation}
\label{to prove}
\cI(t, \cdot) = \int_0^te^{-(t-s)}\mathbf{B}(Y(s))dW(s),
\end{equation}
where the integral on the right-hand side is the $H$-valued stochastic integral which appears in the mild formulation of a solution to \eqref{DZ interp}, defined according to Definition \ref{defin: DZ solution}.  

\vspace{0.5cm}
\noindent\textit{Step 1:}
Adapting Proposition 2.6 of \cite{Dalang} very slightly, we have that the Walsh integral $\cI(t, x)$ can be written as the integral with respect to the \textit{cylindrical} Wiener process $\cW = \{\cW_t(u): t\geq 0, u\in U\}$ with covariance $\mathbf{Id}_U$.
\footnote{This is a family of random variables such that for each $u\in U$, $(\cW_t(u))_{t\geq0}$ is a Brownian motion with variance $t\|u\|^2_U$, and for all $s, t\geq0$, $u_1,u_2\in U$, $\E[\cW_t(u_1)\cW_s(u_2)] = (s\wedge t)\langle u_1, u_2\rangle_U$. See for example \cite{Dalang} Section 2.1}
Precisely, we have
\[
\mathcal{I}(t, x) = \int_0^t g^{t, x}_sd\cW_s,
\]
for all $t\geq 0, x\in\R^N$, where $g^{t,x}_s(y) := e^{-(t-s)} \sigma(Y(s, x))\varphi(x-y)$, $y\in\R^N$, which is in  $L^2(\Omega \times[0,T]; U)$ for any $T>0$ thanks to \eqref{uniform bound}.
By definition, the integral with respect to the cylindrical Wiener process $\cW$ is given by 
\[
\int_0^t g^{t, x}_sd\cW_s = \sum_{k=1}^\infty \int_0^t \langle g^{t, x}_s, e_k\rangle_Ud\beta_k(s),
\]
where $\{e_k\}_{k=1}^\infty$ is a complete orthonormal basis for $U$, and $(\beta_k(t))_{t\geq0} := (\cW_t(e_k))_{t\geq0}$ are independent real-valued Brownian motions.  This series is convergent in $L^2(\Omega)$.

%

\vspace{0.5cm}
\noindent\textit{Step 2:}
Fix arbitrary $T>0$. As in Section 3.5 of \cite{Dalang}, we can consider the process $\{W(t), t\in[0, T]\}$ defined by 
\begin{equation}
\label{cylindrical Wiener}
W(t) = \sum_{k=1}^\infty \beta_k(t) J(e_k)
\end{equation}
where $J:U\to U$ is a Hilbert-Schmidt operator. 
$W(t)$ takes its values in $U$, where it is a $Q (= JJ^*)$-Wiener process with $\mathrm{Tr}(Q)<\infty$ (Proposition 3.6 of \cite{Dalang}). 
We define $J(u) := \sum_{k} \sqrt{\lambda_k}\langle u, e_k\rangle_Ue_k$ for a sequence of positive real numbers $(\lambda_k)_{k\geq1}$ such that $\sum_k \lambda_k <\infty$.

Now define
\begin{align*}
\Phi_s^{t,x}(u) &= \left\langle g^{t, x}_s, u \right\rangle_U,
\end{align*}
which takes values in $\R$.  
Proposition 3.10 of \cite{Dalang} tells us that the process $\{\Phi_s^{t,x}, s\in[0, T]\}$ defines a predictable process with values in $L_2(U, \R)$ and
\begin{equation}
\label{DZ link}
\int_0^t \Phi_s^{t,x} dW(s) = \int_0^tg^{t, x}_sd\cW_s,
\end{equation}
where the integral on the left is defined according to Section \eqref{int Q-Wiener}, with values in $\R$.

\vspace{0.5cm}
\noindent\textit{Step 3:}
We now note that the original Walsh integral $\cI(\cdot, \cdot) \in L^2(\Omega \times [0, T]; H)$.  Indeed, because of \eqref{L2 burkholder},
\begin{align*}
\|\cI \|_{ L^2(\Omega \times [0, T]; H)}^2 &= \E\left[\int_0^T\|\cI(t,\cdot)\|_H^2dt\right] = \int_0^T\int_{\R^N}\mE{|\cI(t,x)|^2}\rho_w(x)dxdt\\
& \leq  \|\varphi\|_{L^2(\R^N)}^2 \int_0^T\int_0^t\int_{\R^N}e^{-2(t-s)}\mE{\sigma^2(Y(s, x))}ds \rho_w(x)dxdt< \infty,
\end{align*}
again thanks to \eqref{uniform bound}.
Hence $\cI(t, \cdot)$ takes values in $H$, and we can therefore write
\begin{align*}
\cI(t, \cdot) &= \sum_{j=1}^\infty\langle\cI(t, \cdot), f_j \rangle_Hf_j= \sum_{j=1}^\infty\left\langle\int_0^t \Phi_s^{t,\cdot} dW(s), f_j \right\rangle_Hf_j,
\end{align*}
by \eqref{DZ link}, where $\{f_j\}_{j=1}^\infty$ is a complete orthonormal basis in $H$.  Moreover, by using \eqref{cylindrical Wiener}
\begin{align}
\label{expansion}
\cI(t, \cdot) &=  \sum_{j=1}^\infty\left(\int_{\R^N}\left(\int_0^t \Phi_s^{t,x} dW(s)\right) f_j(x) \rho_w(x)dx\right)f_j\nonumber\\
&=  \sum_{j=1}^\infty\left(\int_{\R^N}\left(\sum_{k=1}^\infty \int_0^t \Phi_s^{t,x}(\sqrt{\lambda_k}e_k) d\beta_k(s)\right) f_j(x) \rho_w(x)dx\right)f_j.
\end{align}

Finally, consider the $H$-valued stochastic integral 
\[
\int_0^te^{-(t-s)}\mathbf{B}(Y(s))dW(s),
\]
where $\mathbf{B}: H \to L_0(U, H)$ is given above.
Then similarly 
\begin{align*}
&\int_0^te^{-(t-s)}\mathbf{B}(Y(s))dW(s) = \sum_{j=1}^\infty \left\langle\int_0^te^{-(t-s)}\mathbf{B}(Y(s))dW(s), f_j\right\rangle_Hf_j\\
&\qquad= \sum_{j=1}^\infty \left\langle\sum_{k=1}^\infty \int_0^te^{-(t-s)}\sqrt{\lambda_k}\mathbf{B}(Y(s))(e_k)d\beta_k(s), f_j\right\rangle_Hf_j\\
&\qquad = \sum_{j=1}^\infty\left( \int_{\R^N}\left(\sum_{k=1}^\infty \int_0^te^{-(t-s)}\sqrt{\lambda_k}\mathbf{B}(Y(s))(e_k)(x)d\beta_k(s)\right)f_j(x)\rho_w(x)dx\right)f_j.
\end{align*}
Here, by definition, for $x\in\R^N$, $0\leq s \leq t$,
\begin{align*}
e^{-(t-s)}\sqrt{\lambda_k}\mathbf{B}(Y(s))(e_k)(x) &= \int_{\R^N}e^{-(t-s)}\sigma(Y(s, x))\varphi(x-y)\sqrt{\lambda_k}e_k(y)dy \\
&=e^{-(t-s)}\sigma(Y(s, x)) \langle \varphi(x-\cdot),\sqrt{\lambda_k}e_k\rangle_U =\Phi_s^{t,x}(\sqrt{\lambda_k}e_k),
\end{align*}
which proves \eqref{to prove} by comparison with \eqref{expansion}.

\vspace{0.5cm}
\noindent\textit{Step 4:}
To conclude it suffices to note that the pathwise integrals in \eqref{Walsh solution} and the mild $H$-valued solution to \eqref{DZ interp} coincide as elements of $H$.  Indeed, it is clear that, by definition of $\mathbf{F}$,
\[
 \int_0^te^{-(t-s)}\int_{\R^N} w(\cdot,y)G(Y(s,y))dyds = \int_0^t e^{-(t-s)}\mathbf{F}(Y(s))ds,
\]
where the later in an element of $H$.

\end{proof}

\section{Conclusion}
\label{conclusion}
We have here explored two alternative ways to define in a mathematically precise fashion the notion of a stochastic neural field. Both of these approaches have been used previously without theoretical justification by scientists in the field of theoretical neuroscience.  Indeed, the approach of using the theory of Hilbert space valued processes presented by Da Prato and Zabczyk  (analysed in Section \ref{EE}) is adopted in \cite{RiedlerKuehn}, while we argue the random field approach is that which is implicitly used by Bressloff, Ermentrout and their associates in \cite{bressloff-webber-2012,bressloff-wilkerson:12,kilpatrick-ermentrout:13}. 

The difference between the two constructions is completely determined by the type of noise that one wishes to consider in the neural field equation, which may give rise to inherently different solutions.  The advantage of the construction of a solution as a stochastic process taking values in a Hilbert space carried out in Section \ref{EE}, is that it allows one to consider more general diffusion coefficients (see Remark \ref{rem: comparison}).  Moreover, our construction using this approach can also handle a noise term that has no spatial correlation i.e. a pure space-time white noise, by taking the correlation function $\varphi$ to be a Dirac mass (see Section \ref{color}).  A disadvantage is that we have to be careful to impose conditions which control the behavior of the solution in space at infinity and guarantee the integrability of the solution.  In particular we require that the connectivity function $w$ either satisfies the strong conditions (\textbf{C1}) and (\textbf{C2}), or the weaker but harder to check conditions (\textbf{C1'}) and (\textbf{C2'}).

On the other hand, the advantage of the random field approach developed in Section \ref{random fields} is that one no longer needs to control what happens at infinity.  We therefore require fewer conditions on the connectivity function $w$ to ensure the existence of a solution ((\textbf{C2'}) is sufficient -- see Theorem \ref{thm: e+u GN}).  Moreover, with this approach, it is easier to write down conditions that guarantee the existence of a solution that is continuous in both space and time (as opposed to the Hilbert space approach, where spatial regularity is somewhat hidden).  However, in order to avoid non-physical distribution valued solutions, we had to impose \textit{a priori} some extra spatial regularity on the noise (see Section \ref{smoothed noise}).

The relationship between the two approaches is summarized in Section \ref{sec: comparison}, where we showed that if we impose the extra condition (\textbf{C1'}) to ensure integrability, it is possible to reinterpret the random field solution as a Hilbert space valued process that satisfies an infinite dimensional stochastic evolution equation, though with a subtly different noise term to those equations originally considered.  Nonetheless, we are able to see that when $\sigma:\R\to\R$ is bounded, the random field solution is in fact a special case of the Hilbert space valued solution constructed in Section \ref{Application} (under the additional condition that $\varphi\in L^1(\R^N)\cap L^2(\R^N)$ -- see Remark \ref{rem: comparison}).

Our main conclusion here is thus that the approach to take really does depend on the end goal.  If one is interested in very general diffusions, and has some strong decay properties on $w$, the infinite dimensional Hilbert space approach is well-suited.  On the other hand, if one is interested in spatially regular solutions, and does not wish to impose such strong decay properties on $w$, but is content with the addition of less general and more regular noise terms, the random field approach should be taken.

We end with a word about the applicability of our results.  Neural field equations are commonly encountered in neuroscience with regard to modeling brain areas. In practice one is often interested in modeling two- or three-dimensional pieces of cortex whose size is large with respect to that of the support of the connectivity kernel $w$. It is often necessary to extend the physical space where the brain tissues are located with representation spaces to account for the computations performed by the neurons. For example in visual perception features such as disparity (related to depth perception), velocity (related to visual motion perception), or color can be represented by points in $\R^2$ and $\R^3$. In sound perception the local Fourier analysis performed by the cochlea is represented by a spatial distribution of points in $\mathbb{C}$. Other examples can be found in the motor cortex where the neurons preparing for an action of part of the body store representations for driving the effector muscles that are naturally represented by points in some $\R^N$.

 If the size of the connectivity kernel becomes comparable to that of the considered brain area or if the feature space is naturally bounded (e.g. visual orientations, rotation angles for effectors) it becomes more natural to work with a bounded subset of $\R^N$ with periodic or zero boundary conditions.  However, both our approaches still apply in this setup (and are in fact easier to justify).
 
\vspace {0.7cm}
\noindent
{\bf Acknowledgements}\\
The authors are grateful to James Maclaurin for suggesting the use of the Fourier transform in Example 2 on page \pageref{page:example2}.
\bibliographystyle{siam}
\bibliography{NeuralField}

\end{document}